\documentclass[12pt]{amsart}
\usepackage{amssymb,amsmath}
\usepackage{amsthm}
\usepackage{geometry}    
\usepackage{mathrsfs}

\usepackage{vmargin}

\setmargrb{1in}{1in}{1in}{1in}

\newtheorem{theorem}{Theorem}[section]
\newtheorem*{theorem*}{Theorem}
\newtheorem{lemma}[theorem]{Lemma}
\newtheorem{proposition}[theorem]{Proposition}
\newtheorem{corollary}[theorem]{Corollary}
\theoremstyle{definition}
\newtheorem{definition}[theorem]{Definition}
\newtheorem{remark}[theorem]{Remark}
\newtheorem{example}[theorem]{Example}
\newtheorem{conjecture}[theorem]{Conjecture}
\newtheorem{problem}[theorem]{Problem}

\setlength{\parskip}{\medskipamount}

	\begin{document}

\title{On a problem posed by Mahler}

\author{Diego Marques} 
\address{Department of Mathematics, University of Brasilia, Brasilia, DF, Brazil}

\author{Johannes Schleischitz} 
\address{Institute of Mathematics, Univ. Nat. Res. Life Sci. Vienna, 1180, Austria}


\begin{abstract}
E. Maillet proved that the set of Liouville numbers is preserved
under rational functions with rational coefficients. Based on this result,
a problem posed by Kurt Mahler is to investigate whether there exist
entire transcendental functions with this property or not. 
For large parametrized classes of Liouville numbers, we construct such functions and moreover we show that it can be constructed such that all their derivatives share this property.
We use a completely different approach than in a recent paper, where functions with
a different invariant subclass of Liouville numbers were constructed (though with no information on derivatives). More generally, we study the image of Liouville numbers under analytic functions, with particular attention to $f(z)=z^{q}$ where $q$ is a rational number.  
\end{abstract}

\maketitle

\noindent
{{\em Keywords}: Liouville numbers, transcendental function, exceptional set, continued fractions \\
\textit{Math Subject Classification AMS 2010:} Primary 11J04, Secondary 11J82, 11K60}

\vspace{4mm}

\section{Introduction} \label{ggg}

\subsection{Definitions} \label{definitions}
As usual, for a real number $\alpha$ we will write $\lfloor \alpha\rfloor$ for the largest integer
not greater than $\alpha$, $\lceil \alpha \rceil$ for the smallest integer not smaller than $\alpha$,
and $\{\alpha\}=\alpha-\lfloor \alpha\rfloor$. Moreover $\Vert \alpha\Vert$ will denote the 
distance from $\alpha$ to the closest integer, and we will write $A\asymp B$ if both $A\ll B$
and $B\ll A$ are satisfied. For a function $f:X\mapsto Y$ and a set $A\subseteq X$
we will write $f(A):=\{ f(x):x\in{A}\}$.

A transcendental function is defined as an analytic function $f(z)$ which is algebraically independent 
of its variable $z$ over some field.  
We will usually assume this field to be $\mathbb{C}$, and when at times we deal with $\overline{\mathbb{Q}}$
or $\mathbb{Q}$ instead this will be explicitly mentioned.
The complementary set of analytic functions $f$ that satisfy some polynomial identity 
$P(z,f(z))=0$ with $P\in{\mathbb{C}[X,Y]}$ (resp. $P\in{\overline{\mathbb{Q}}[X,Y]}$ or
$P\in{\mathbb{Q}[X,Y]}$) are called algebraic functions.
It is a widely known fact that the set of algebraic entire functions (over $\mathbb{C}$)
coincides with the set of complex polynomials $\mathbb{C}[X]$. The non-trivial inclusion
can be inferred from Great~Picard~Theorem, see Theorem~4.2 and Corollary~4.4 in~\cite{conway}. 

At the end of XIXth century, after the proof by Hermite and Lindemann of the transcendence of $e^{\alpha}$ for all nonzero algebraic $\alpha$, a question arose:

\textit{Does a transcendental analytic function usually take transcendental values at algebraic points?}

In the example of the exponential function $e^z$, the word ``usually"\ stands for avoiding the exception $z=0$. The set of the exceptions of this ``rule"\ was named by Weierstrass as {\em exceptional set} of a function $f$, which is defined as
\begin{equation} \label{eq:ef}
S_{f}:= \{\alpha\in{\overline{\mathbb{Q}}}: f(\alpha)\in{\overline{\mathbb{Q}}}\}.
\end{equation}

The study of exceptional sets started in 1886 with a letter of Weierstrass to
Strauss. Clearly, for algebraic functions over the field $\overline{\mathbb{Q}}$, one has $S_{f}= \overline{\mathbb{Q}}$. In 2009, Huang, Marques and Mereb \cite{huang} proved, in particular, that all subset of $\overline{\mathbb{Q}}$ is the exceptional set of uncountable many transcendental entire functions (including their derivatives), see \cite{hyper} for a more general result.

\subsection{Liouville numbers and Mahler's classification} \label{beginn} 
The {\em irrationality exponent}
of a real number $\alpha$, denoted by $\mu(\alpha)$, is defined as 
the (possibly infinite) supremum of all $\eta\geq 0$ such that
\begin{equation} \label{eq:1}
\left\vert \alpha-\frac{y}{x}\right\vert \leq x^{-\eta}
\end{equation}
has infinitely many rational solutions $y/x$. 
We point out that \eqref{eq:1} can be written equivalently using linear
forms as $\vert \alpha x-y\vert\leq x^{-\eta+1}$. Mostly in this paper, 
the linear form representation will be more convenient.
By Dirichlet's~Theorem, Corollary~2 in \cite{wald},
$\mu(\alpha)\geq 2$ for all $\alpha\in{\mathbb{R}}\backslash \mathbb{Q}$ and the equality holds for non rational real algebraic numbers $\alpha$ (by Roth's theorem), whereas $\mu(p/q)=0$.

Real numbers with irrationality exponent equal to infinity
are called {\em Liouville numbers}. We will write $\zeta$ for Liouville numbers
in contrast to $\alpha$ for arbitrary real numbers and
denote the set of Liouville numbers by $\mathscr{L}$. The elements of $\mathscr{L}$
are known to be transcendental
by Liouville's~Theorem, which also led to the first construction of
a transcendental number, namely the Liouville constant
\begin{equation} \label{eq:a}
L=\sum_{n\geq 1} 10^{-n!}=0.1100010000000000000000010\ldots.
\end{equation}
Altering the exponents
in $L$ slightly and adding fixed rational numbers it is not hard to construct uncountably
many elements of $\mathscr{L}$ within any set $A\subseteq \mathbb{R}$ with non-empty interior, 
see also Theorem~\ref{maillet} in Section~\ref{1.1}. Furthermore, 
the set $\mathscr{L}$ is known to be a dense $G_{\delta}$ set, since it can 
be written $\mathscr{L}=\cap_{n\geq 1} U_{n}$ where
\[
U_{n}:=\bigcup_{q\geq 2} \bigcup_{p\in{\mathbb{Z}}} 
\left(\frac{p}{q}-\frac{1}{q^{n}},\frac{p}{q}+\frac{1}{q^{n}}\right)\setminus \left\{\frac{p}{q}\right\}
\]
are open dense sets. Thus $\mathscr{L}$ is a residual set, i.e. the complement of a 
first category set.
However, $\mathscr{L}$ is very small in sense of measure theory, as its
Hausdorff dimension is $0$, see~\cite{jarnik}.

Some results of the paper are related to Mahler's $U$-numbers, so we want to give a short introduction
of Mahler's classification of real transcendental numbers into $S$,$T$ and $U$-numbers
regarding their properties concerning approximation
by algebraic numbers. 
In fact we will introduce Koksma's classes $S^{\ast},T^{\ast}$ and $U^{\ast}$, however the corresponding classes
are known to be pairwise identical~\cite[cf. Theorem~3.6]{ybu}. 
For real transcendental $\zeta$ and $n\geq 1$
an integer, define $w_{n}^{\ast}(\zeta)$ as the (possibly infinite) 
supremum of $\nu>0$ such that
\[
0<\vert \alpha-\zeta\vert \leq H(\alpha)^{-\nu-1}
\]
has infinitely many solutions in algebraic numbers $\alpha$ of degree at most $n$ for arbitrarily large $X$, 
where $H(\alpha)$ is the largest absolute value of the coefficients of
the irreducible (over $\mathbb{Z}$) minimal polynomial $P\in{\mathbb{Z}[X]}$ of $\alpha$. 
Obviously $w_{1}^{\ast}(\zeta)\leq w_{2}^{\ast}(\zeta)\leq \cdots$.
The set of $S$-numbers is defined as the set of real transcendental numbers that satisfy
\[
\limsup_{n\to\infty} \frac{w_{n}^{\ast}(\zeta)}{n}<\infty.
\]
$T$-numbers are defined by the properties
\[
\limsup_{n\to\infty} \frac{w_{n}^{\ast}(\zeta)}{n}=\infty,
\qquad w_{n}^{\ast}(\zeta)<\infty \quad \text{for} \quad n=1,2,\ldots.
\]
Finally $U$-numbers are defined as numbers that 
satisfy $w_{n}^{\ast}(\zeta)=\infty$ for some finite index $n$. 
If $m$ is the smallest such index then $\zeta$ is a $U_{m}$-number. The definitions imply that the set 
$\mathscr{L}$ coincides with the set of $U_{1}$-numbers, and the set of $U$-numbers
is the disjoint union of the sets of $U_{m}$-numbers over $m\geq 1$. 
We quote some important facts.
Two algebraically dependent numbers belong to the same class~\cite{burgertubbs},~\cite{leveque2}. 
Almost all $\zeta$, in the sense of Lebesgue measure, are $S$-numbers (this follows immediately from a result
of Sprind\^zuk~\cite{sprindi}), but the set of $T$-numbers
and $U_{m}$-numbers for $m\geq 1$ are non-empty. W. Schmidt was the first to construct $T$-numbers~\cite{wschm},
and the first construction of $U_{m}$-numbers of arbitrary prescribed degree $m$ was due to LeVeque~\cite{leveque}.
See also~\cite[Chapter~3]{schneider} or~\cite[Chapter~3]{ybu}. 

\subsection{The image of $\mathscr{L}$ under analytic functions}  \label{1.1}
In his pioneering book, E. Maillet \cite[Chapitre III]{maillet} discusses some arithmetic properties of Liouville numbers. In particular, he proved the following result concerning the image
of $\mathscr{L}$ under analytic functions.
\begin{theorem}[Maillet] \label{maillet}
If $f$ is non-constant rational function with rational coefficients, then $f(\mathscr{L})\subseteq \mathscr{L}$.
\end{theorem}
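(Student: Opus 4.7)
The plan is to transport Liouville-quality rational approximations of $\zeta$ through $f$ to equally good rational approximations of $f(\zeta)$. I would start by clearing denominators so that $f=P/Q$ with coprime $P,Q\in\mathbb{Z}[X]$ of degrees $d_1,d_2$, and set $d:=\max(d_1,d_2)$. Two preliminary verifications are needed: first, $f(\zeta)$ is defined, since the poles of $f$ are roots of $Q\in\mathbb{Q}[X]$, hence algebraic, whereas $\zeta$ is transcendental; second, $f(\zeta)$ is irrational, since $f(\zeta)=\beta\in\mathbb{Q}$ would make $\zeta$ a root of the nonzero (as $f$ is non-constant) polynomial $P(X)-\beta Q(X)\in\mathbb{Q}[X]$, contradicting $\zeta\in\mathscr{L}$.

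For the main argument, I would fix an arbitrary large $\eta>0$ and pick one of the infinitely many rationals $p/q$ with $|\zeta-p/q|\leq q^{-\eta}$, requiring that $p/q$ lie in a fixed compact neighborhood $U$ of $\zeta$ on which $f$ is holomorphic. Then $f(p/q)=A/B$ with
\[
A:=q^{d}P(p/q)\in\mathbb{Z},\qquad B:=q^{d}Q(p/q)\in\mathbb{Z},
\]
and the uniform bound $|B|\leq C_1 q^{d}$ holds for some $C_1=C_1(f,U)$. Since $f$ is $C^1$ on $U$, the mean value theorem yields $|f(\zeta)-A/B|\leq M|\zeta-p/q|\leq Mq^{-\eta}$, where $M:=\sup_U|f'|$. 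Using $q\geq (|B|/C_1)^{1/d}$ one obtains
\[
|f(\zeta)-A/B|\leq MC_1^{\eta/d}\,|B|^{-\eta/d}.
\]

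Given any target $\eta'>0$, I would choose $\eta>d\eta'$ at the start; then for all sufficiently large $|B|$ along the Liouville sequence the constant $MC_1^{\eta/d}$ is absorbed, yielding $|f(\zeta)-A/B|\leq |B|^{-\eta'}$. Distinctness of the approximations is automatic: if only finitely many distinct values $A/B$ occurred, one of them would equal $f(\zeta)$ by the estimate, contradicting irrationality of $f(\zeta)$. Hence $\mu(f(\zeta))\geq\eta'$, and since $\eta'$ is arbitrary, $f(\zeta)\in\mathscr{L}$. The only subtle bookkeeping is the factor $d=\deg f$ that appears in the denominator bound, effectively replacing $\eta$ by $\eta/d$ in the image; this would be fatal for a number of finite irrationality exponent, but is harmless for Liouville numbers where $\eta$ is unbounded.
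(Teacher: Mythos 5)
Your proof is correct and follows essentially the same route the paper takes: it derives Maillet's theorem by transporting the Liouville approximations $p/q$ through $f$ via the mean value theorem and bounding the denominator of $f(p/q)$ by a fixed power $q^{d}$, exactly the mechanism of Theorem~\ref{anfang} together with the paper's subsequent verification that rational functions with rational coefficients satisfy its hypotheses. The only (harmless) divergence is in excluding $f(\zeta)\in\mathbb{Q}$: you use the direct algebraic argument that $f(\zeta)=\beta$ would force $P-\beta Q$ to vanish at the transcendental $\zeta$, whereas the paper's general theorem must instead invoke the identity theorem and the lower bound $1/(q'l_{2})$; your shortcut is available precisely because $f$ is rational with rational coefficients, and the remaining bookkeeping (e.g.\ $|B|\geq c_{0}q^{d}\to\infty$ since $Q$ is bounded away from zero on $U$) is routine.
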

We observe that a kind of converse of this result is not valid in general, e.g., taking $f(x)=x^2$ 
and any number of the form $\ell=\sum a_{j}10^{-j!}$ with $a_{j}\in{\{2,4\}}$, the number $\zeta=\sqrt{(3+\ell)/4}$
is not a Liouville number \cite[Theorem 7.4]{ybu}, but $f(\zeta)$ is. Also the rational coefficients cannot be taken algebraic (with at least one of them non-rational). For instance, for $L$ in \eqref{eq:a} and $m\geq 2$
the number $L\sqrt[m]{3/2}$ is not a Liouville number, 
see~\cite[Th\' eor\` eme I$_3$]{maillet}. In fact, $L\sqrt[m]{3/2}$ is a $U_{m}$-number~\cite{CM}.

A problem posed by Mahler~\cite{mahler} is to study which analytic functions share this property.
In particular he asked whether there exist non-constant entire transcendental functions for which this is true.

In 1886, Weierstrass already made a construction of entire transcendental functions
with the property $f(\mathbb{Q})\subseteq \mathbb{Q}$. 
St\"ackel~\cite{staeckel} proved that for any countable set $A\subseteq\mathbb{C}$ and any dense set
$B\subseteq \mathbb{C}$, there exists an entire transcendental function $f$ with the 
property $f(A)\subseteq B$. F. Gramain showed that this is true for subsets of $\mathbb{R}$ as well.
Several other generalizations are known, we refer the reader 
to~\cite{huang},~\cite{marques},~\cite{mmoreira},~\cite{alf} for references.
However, due to the uncountable cardinality of $\mathscr{L}$, the used 
classic methods dealing with recursive constructions, do not to provide an obvious construction of
entire transcendental functions with $f(\mathscr{L})\subseteq \mathscr{L}$.
More generally, Mahler's problem suggests to study the set $f(\mathscr{L}\cap I)\cap \mathscr{L}$
for functions $f$ analytic on some interval $I\subseteq \mathbb{R}$ with real Taylor coefficients.
A recent result due to Kumar, Thangadurai and Waldschmidt admits to show that the set is always 
rather large, we will carry this out in Section~\ref{sek4}.

\subsection{Continued fractions}  \label{cfr}

We introduce the notation we will
use throughout the paper for continued fractions and 
gather various related results. The proofs can be found in~\cite{perron}
if not stated otherwise.

Let $\alpha\in{\mathbb{R}\setminus \mathbb{Q}}$. Let $\alpha_{0}=\alpha$, $r_{0}=\lfloor \alpha\rfloor$
and define the sequences $(r_{j})_{j\geq 0}$, $(\alpha_{j})_{j\geq 0}$ via
the recursive formulas $r_{j+1}=\lfloor 1/\{\alpha_{j}\}\rfloor$ and $\alpha_{j+1}=\{1/\{\alpha_{j}\}\}$.
Then if we define 
\[
[r_{0};r_{1},r_{2},\ldots,r_{n}]:=r_{0}+1/(r_{1}+1/(r_{2}+\cdots+1/r_{n}))\cdots) 
\]
the identity $\alpha=\lim_{n\to\infty} [r_{0};r_{1},r_{2},\ldots,r_{n}]$ 
holds. This representation is unique and $[r_{0};r_{1},r_{2},\ldots]$ is called 
the continued fraction expansion of $\alpha$, and $r_{j}$
are called {\em partial quotients}. Denote 
\[
\frac{s_{n}}{t_{n}}=[r_{0};r_{1},\ldots,r_{n}], \qquad \qquad n\geq 0,
\]
the $n$-th {\em convergent} of $\alpha$ in lowest terms. 
If we put $t_{-2}=1, t_{-1}=0$, we have 
\begin{equation} \label{eq:contf}
t_{n}=r_{n}t_{n-1}+t_{n-2}, \qquad n\geq 0.
\end{equation}
The analogue recursive formula for the $s_{n}$ holds but we do not need it. 
Moreover,  for any $n\geq 0$ we have $\vert s_{n}t_{n+1}-s_{n+1}t_{n}\vert=1$, such that 
both $s_{n},s_{n+1}$ such as $t_{n},t_{n+1}$ are coprime.

\begin{theorem}[Legendre] \label{prop}                     
Let $\alpha\in{\mathbb{R}\setminus{\mathbb{Q}}}$. 
If $\vert \alpha q-p\vert <(1/2)q^{-1}$ holds for integers $p,q$,
then the fraction $p/q$ equals a convergent of the continued fraction expansion of $\alpha$.
\end{theorem}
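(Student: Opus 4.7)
The plan is to exhibit $p/q$ as a convergent $s_n/t_n$ by reverse-engineering the continued fraction expansion of $\alpha$ from a finite continued fraction of $p/q$. First I would reduce to $\gcd(p,q)=1$ with $q\geq 1$; the hypothesis $\vert \alpha q - p\vert <1/(2q)$ is preserved by this reduction. Next, write $p/q=[r_0;r_1,\ldots,r_n]$, exploiting the fact that every rational admits two such finite expansions of opposite parity, via the elementary identity $[r_0;\ldots,r_n]=[r_0;\ldots,r_n-1,1]$ (valid for $r_n\geq 2$, and reducing to $[p]=[p-1;1]$ when $q=1$). So the parity of $n$ is at our disposal.

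Next, define a real number $c$ by requiring
\[
\alpha \,=\, [r_0;r_1,\ldots,r_n,c] \,=\, \frac{c\, s_n + s_{n-1}}{c\, t_n + t_{n-1}},
\]
which uniquely yields $c = (s_{n-1}-\alpha t_{n-1})/(\alpha t_n - s_n)$. Setting $\epsilon:=\alpha q - p$ and using the sign-specific form $s_{n-1}t_n - s_n t_{n-1}=(-1)^n$ of the unimodularity relation, together with $s_n=p$ and $t_n=q$, this rewrites as
\[
c \,=\, \frac{(-1)^n-\epsilon\, t_{n-1}}{q\epsilon}.
\]
Now I would fix the parity of $n$ so that $(-1)^n$ has the same sign as $\epsilon$. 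Then numerator and denominator of $c$ share a sign, giving $c>0$; and the bounds $\vert q\epsilon\vert <1/2$ (the hypothesis) together with $\vert \epsilon\vert\, t_{n-1}\leq t_{n-1}/(2q)\leq 1/2$ (using $t_{n-1}\leq t_n=q$) yield $\vert (-1)^n-\epsilon\, t_{n-1}\vert > 1/2 > \vert q\epsilon\vert$, hence $c>1$.

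Once $c>1$, its continued fraction expansion $c=[b_0;b_1,b_2,\ldots]$ has $b_0\geq 1$, and splicing produces the standard continued fraction
\[
\alpha \,=\, [r_0;r_1,\ldots,r_n,b_0,b_1,b_2,\ldots],
\]
so $p/q=s_n/t_n$ appears as a convergent of $\alpha$. The principal technical point is the parity choice: with the wrong parity, one obtains $c<0$, which cannot serve as a tail of an ordinary continued fraction, and the argument collapses. After that choice is made correctly, only the two elementary bounds on $q\vert \epsilon\vert$ and $\vert \epsilon\vert\, t_{n-1}$ remain, both of which follow directly from the hypothesis and the recursion $t_n=r_n t_{n-1}+t_{n-2}$ already recorded in Section~\ref{cfr}.
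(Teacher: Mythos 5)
Your proof is correct: the parity choice, the formula $c=\bigl((-1)^n-\epsilon t_{n-1}\bigr)/(q\epsilon)$, and the bounds giving $c>1$ are exactly what is needed, and uniqueness of the continued fraction expansion of the irrational $\alpha$ then identifies $p/q$ as the convergent $s_n/t_n$. The paper itself states Theorem~\ref{prop} without proof, referring to Perron, and your argument is essentially the classical one found there, so there is nothing to reconcile.
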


\begin{theorem}[Lagrange]  \label{cfe}                       
Let $\alpha\in{\mathbb{R}\setminus{\mathbb{Q}}}$ and $s_{n}/t_{n}$ the $n$-th convergent of 
$\alpha=[r_{0};r_{1},r_{2},\cdots]$. Then 
\[
\frac{r_{n+2}}{t_{n+2}}<\vert \alpha t_{n}-s_{n}\vert < \frac{1}{t_{n+1}}=
\frac{1}{t_{n}r_{n+1}+t_{n-1}}<\frac{1}{t_{n}r_{n+1}}.
\]
\end{theorem}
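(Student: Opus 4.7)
The plan is to parameterize $\alpha$ by its $(n{+}1)$-st complete quotient
$\beta_{n+1}:=[r_{n+1};r_{n+2},r_{n+3},\ldots]$, which is irrational and strictly
between $r_{n+1}$ and $r_{n+1}+1$ by construction, and then to read off $\alpha t_{n}-s_{n}$
from the resulting rational expression. The central identity I would establish first is
\[
\alpha \;=\; \frac{\beta_{n+1}\,s_{n}+s_{n-1}}{\beta_{n+1}\,t_{n}+t_{n-1}},
\]
proved by a routine induction on $n$ using the recursion~\eqref{eq:contf} for the denominators
together with its (unstated but analogous) counterpart $s_{n}=r_{n}s_{n-1}+s_{n-2}$ for the
numerators, initialized by $s_{-1}=1$, $s_{-2}=0$. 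Clearing denominators and invoking the
determinant identity $|s_{n-1}t_{n}-s_{n}t_{n-1}|=1$ (a brief induction from the same
recursions, essentially the coprimality fact mentioned in the excerpt) then yields the key
formula
\[
|\alpha t_{n}-s_{n}|\;=\;\frac{1}{\beta_{n+1}\,t_{n}+t_{n-1}}.
\]
Thus the theorem reduces to two-sided estimates of the denominator $D:=\beta_{n+1}t_{n}+t_{n-1}$.

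For the upper bound I would use the strict inequality $\beta_{n+1}>r_{n+1}$, which combined
with~\eqref{eq:contf} immediately gives $D>r_{n+1}t_{n}+t_{n-1}=t_{n+1}$, hence
$|\alpha t_{n}-s_{n}|<1/t_{n+1}$. The remaining reformulation $1/t_{n+1}=1/(t_{n}r_{n+1}+t_{n-1})
<1/(t_{n}r_{n+1})$ is then immediate from~\eqref{eq:contf} and the positivity of $t_{n-1}$.
For the lower bound I would expand $\beta_{n+1}=r_{n+1}+1/\beta_{n+2}$, so that
$D=t_{n+1}+t_{n}/\beta_{n+2}$, and use $\beta_{n+2}>r_{n+2}$ to conclude
\[
D\;<\;t_{n+1}+\frac{t_{n}}{r_{n+2}}\;=\;\frac{r_{n+2}t_{n+1}+t_{n}}{r_{n+2}}\;=\;\frac{t_{n+2}}{r_{n+2}},
\]
again using~\eqref{eq:contf}. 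Inverting this bound gives the lower estimate
$|\alpha t_{n}-s_{n}|>r_{n+2}/t_{n+2}$.

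There is no serious obstacle here: once the complete-quotient parameterization of $\alpha$ is in
place, the result is bookkeeping with the two-term recursion~\eqref{eq:contf}. The only minor
point of care is to keep all inequalities strict, which is automatic from the irrationality of
$\alpha$ (and hence of every $\beta_{j}$). The principal calculational step is the derivation of
the complete-quotient identity above; after that, the upper and lower bounds are symmetric
one-line arguments.
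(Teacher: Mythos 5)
Your argument is correct and is precisely the classical complete-quotient proof: the paper itself offers no proof of this theorem, deferring to Perron's book, and the identity $\alpha=(\beta_{n+1}s_{n}+s_{n-1})/(\beta_{n+1}t_{n}+t_{n-1})$ together with $|s_{n-1}t_{n}-s_{n}t_{n-1}|=1$ is exactly the standard route taken there. The only cosmetic caveat is the index $n=0$, where $t_{-1}=0$ makes the final inequality $1/(t_{n}r_{n+1}+t_{n-1})<1/(t_{n}r_{n+1})$ an equality; this is an artifact of the theorem's statement rather than a flaw in your proof, which otherwise keeps all inequalities strict as required by the irrationality of the complete quotients.
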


In particular, it follows from \eqref{eq:contf} that $\lim_{n\to\infty}\log r_{n+1}/\log t_{n}=\infty$ 
is equivalent to $\lim_{n\to\infty}\log t_{n+1}/\log t_{n}=\infty$, and 
in this case $\alpha\in{\mathscr{L}}$ follows.
More precisely, 
\[
\limsup_{n\to\infty} \frac{\log t_{n+1}}{\log t_{n}}=\infty \qquad \Longleftrightarrow \qquad \alpha\in{\mathscr{L}}.
\]

\subsection{Outline} \label{outline}
This paper is organized in the way that the Sections~\ref{present}, \ref{vieledefs}, \ref{vergangen} 
deal with the main topic of $f(\mathscr{L})\subseteq \mathscr{L}$ for
entire transcendental functions, whereas the Sections~~\ref{sek4},~\ref{hochab} discuss
related topics indicated in Section~\ref{definitions}. The assertion of our main result concerning the first category, 
Theorem~\ref{haupt}, at first sight appears similar to a recent result~\cite{marques}
which we will state in Section~\ref{vieledefs}.
We will show in Section~\ref{vieledefs}, though, that the classes considered in the respective 
theorems are in fact significantly different, and want to point out that also the proofs differ vastly.
Moreover, we point out the advantage of Theorem~\ref{haupt} that it makes assertions on the derivatives
too. See Remark~\ref{qbild} for another difference. Concerning results on related topics the main result
we will proof in Section~\ref{hochab} is basically the following.

\begin{theorem*}
For any $q\in{\mathbb{Q}\setminus \{0\}}$ let $f_{q}(z)=z^{q}$. 
Then there exist uncountably many $\zeta$, some of which 
can be explicitly constructed, such that $f_{q}(\zeta)\in{\mathscr{L}}$
if and only if $q\in{\mathbb{Z}}$. 
\end{theorem*}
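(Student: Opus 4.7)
The "if" direction is immediate from Maillet's theorem: for $q \in \mathbb{Z}\setminus\{0\}$ the map $f_q(z) = z^q$ is a non-constant rational function with rational coefficients, so Theorem~\ref{maillet} gives $f_q(\mathscr{L}) \subseteq \mathscr{L}$. The bulk of the work is the converse, for which I would construct a Liouville $\zeta$ satisfying $\zeta^{P/Q} \notin \mathscr{L}$ for every $P/Q \in \mathbb{Q}\setminus\mathbb{Z}$ written in lowest terms with $Q \geq 2$.

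The central reduction proceeds as follows: if $\omega := \zeta^{P/Q} \in \mathscr{L}$, then for each $N$ there are infinitely many lowest-terms rationals $a/b$ with $|\omega - a/b| \leq b^{-N}$, and applying the mean value theorem to $x \mapsto x^Q$ on a bounded neighborhood of $\omega$ yields
\begin{equation*}
\vert \zeta^P - a^Q/b^Q\vert \leq C(\zeta, P, Q)\, b^{-N}.
\end{equation*}
Since $\gcd(a,b) = 1$ forces $\gcd(a^Q, b^Q) = 1$, the fraction $a^Q/b^Q$ is in lowest terms, so for $N$ large Theorem~\ref{prop} makes $a^Q/b^Q$ a convergent $s_m/t_m$ of $\zeta^P$ with $t_m = b^Q$ a perfect $Q$-th power. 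Thus it suffices to exhibit $\zeta$ with the property that for every integer $P \neq 0$ and every integer $Q \geq 2$ with $\gcd(P,Q) = 1$, only finitely many convergents of $\zeta^P$ have denominator a perfect $Q$-th power.

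I would build $\zeta$ recursively through its continued fraction $[r_0; r_1, r_2, \ldots]$: at stage $n$, impose that $r_{n+1}$ is so enormous that both $\log r_{n+1}/\log t_n \geq n$ (so that $\zeta \in \mathscr{L}$ by the equivalence at the end of Section~\ref{cfr}) and, for every fixed $P$, the approximation $s_n^P/t_n^P$ outstrips every other rational approximation of $\zeta^P$ of comparable denominator size; simultaneously impose that $t_{n+1} = r_{n+1} t_n + t_{n-1}$ is prime. The latter is achievable by Dirichlet's theorem on primes in the arithmetic progression $\{k t_n + t_{n-1}\}_{k \geq 1}$ (noting $\gcd(t_n, t_{n-1}) = 1$), and the admissible values of $r_{n+1}$ remain unbounded. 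The freedom of choice at each stage produces uncountably many $\zeta$; selecting $r_{n+1}$ by an explicit rule, e.g.\ the smallest admissible integer above a prescribed threshold, yields an explicitly constructible example.

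Finally, for any integer $P \neq 0$, one must show that every convergent $u/v$ of $\zeta^P$ with $|\zeta^P - u/v| \leq v^{-N}$ for $N$ large has $(u,v) = (s_n^P, t_n^P)$ for some $n$. Setting $\beta := (u/v)^{1/P}$, we obtain $|\zeta - \beta| \leq C' v^{-N}$ with $\beta$ algebraic of degree dividing $P$. When $\beta$ is rational, $u$ and $v$ are simultaneously $P$-th powers $u_0^P, v_0^P$, and Theorem~\ref{prop} forces $u_0/v_0 = s_n/t_n$ for some $n$, giving the desired conclusion. The main obstacle is the case of irrational algebraic $\beta$: this is addressed by a gap principle driven by the extreme growth of the $r_n$, showing that a $P$-th root of a rational with non-trivial algebraic degree cannot approximate $\zeta$ better than the genuine rational convergents $s_n/t_n$ already do. Granting this, the Liouville-quality convergent denominators of $\zeta^P$ are precisely $\{t_n^P\}_n$; since $t_n$ is prime, $t_n^P = b^Q$ forces $Q \mid P$, incompatible with $\gcd(P,Q) = 1$ and $Q \geq 2$. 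This contradicts the reduction, so $\zeta^{P/Q} \notin \mathscr{L}$, as required.
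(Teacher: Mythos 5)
Your high-level architecture agrees with the paper's: Maillet's theorem handles the ``if'' direction; you build $\zeta\in\mathscr{L}$ with rapidly growing partial quotients and all convergent denominators $t_n$ prime via Dirichlet's theorem on arithmetic progressions; the reduction via the mean value theorem (that a good rational approximation $a/b$ to $\zeta^{P/Q}$ produces a good approximation $a^Q/b^Q$ to $\zeta^P$ which, by Legendre's criterion, must be a convergent with denominator a perfect $Q$-th power) is essentially the paper's Lemma~\ref{inf}; and the case $P=1$ is dispatched correctly, since prime denominators can never be nontrivial $Q$-th powers. So far this matches the paper closely.

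The genuine gap is exactly where you flag one, but it is not a loose end that a generic ``gap principle'' fills -- it is the core difficulty for $|P|\geq 2$, and your route through $\beta=(u/v)^{1/P}$ creates problems the paper avoids entirely. You need to show that the only solutions $(x,y)$ of $|x\zeta^P-y|\leq x^{-P-\delta}$ with large $x$ are integral multiples of $(t_n^P,s_n^P)$. When you pass to $\beta$ and $\beta$ is irrational, you are asking how well a $\zeta\in\mathscr{L}$ can be approximated by algebraic numbers of degree $d\geq 2$. In general Liouville numbers \emph{can} be approximated extremely well by such algebraic numbers -- that is precisely the phenomenon behind $U_m$-numbers (see the surrounding discussion of $U$-numbers and Theorem~7.4 of \cite{ybu} in the paper). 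Whether the constructed $\zeta$ is or is not so approximable by $P$-th roots of rationals is exactly what must be proved, and it does not follow formally from ``extreme growth of $r_n$'' without a quantitative argument.

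The paper's proof never passes to algebraic $\beta$ at all. It remains with the rational linear form $|\zeta^a x-y|$ and uses Lemma~\ref{minkkoro}, a consequence of Minkowski's second theorem, in a two-case analysis depending on whether $t_N\leq x<t_N^a$ or $t_N^a\leq x<t_{N+1}$. In the first case, taking $Q=t_N^a$, the pair $(t_N^a,s_N^a)$ already gives a good solution via \eqref{eq:liovi1}, so no linearly independent $(x,y)$ with $x<Q$ can also give a good solution, yielding $|\zeta^a x-y|>(1/2)t_N^{-a}\geq (1/2)x^{-a}>x^{-a-\delta}$. In the second case one takes either $Q=t_{N+1}^a$ (if $x$ is close to $t_{N+1}$) or $Q=x$, and again the explicit estimate \eqref{eq:liovi1}, combined with $t_{N+1}\asymp t_N^{\nu_N}$, shows $(t_N^a,s_N^a)$ is already a good enough solution to rule out the hypothetical $(x,y)$. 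This geometry-of-numbers argument is what you would need to either reproduce or replace; simply asserting that $s_n^P/t_n^P$ ``outstrips every other rational approximation of $\zeta^P$ of comparable size'' is not something you can \emph{impose} in the recursive construction, because once $\zeta$ is fixed the Diophantine properties of $\zeta^P$ are consequences, not free parameters. As written, your proposal identifies the right target but does not prove it, and the rational-versus-irrational $\beta$ dichotomy is a detour that makes the missing step look more innocuous than it is.
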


\section{An approach connected to $f(\mathbb{Q})$}  \label{present}

For a function $f$ analytic in some open interval $I\subseteq \mathbb{R}$,
we will establish sufficient conditions for $f(\mathscr{L}\cap I)\subseteq \mathscr{L}$,
connected with the image $f(\mathbb{Q})$. More precisely, if we assume
$f(\mathbb{Q})\subseteq \mathbb{Q}$ as in various constructions, see Section~\ref{1.1}, 
and additionally assume certain upper bounds for the complexity of the fractions in the image,
we will be able to deduce $f(\mathscr{L}\cap I)\subseteq \mathscr{L}$.
Keep in mind that $I=\mathbb{R}$ leads to entire
functions. The method can be applied to confirm Theorem~\ref{maillet}.

\begin{theorem} \label{anfang}
Suppose $f$ is non-constant
analytic in some open interval $I\subseteq \mathbb{R}$ and $f(\mathbb{Q}\cap I)\subseteq \mathbb{Q}$.
Moreover, assume that there exists a function $\psi:\mathbb{R}_{>0}\mapsto \mathbb{R}_{>0}$ 
with the properties
\begin{itemize}
\item $\psi(m)=o(m)$ as $m\to\infty$
\item for $\zeta\in{\mathscr{L}}\cap I$ and any $m\geq 1$ we can find coprime $p_{m},q_{m}\geq 2$ such that
\begin{equation} \label{eq:wertz}
\left\vert \zeta-\frac{p_{m}}{q_{m}}\right\vert \leq q_{m}^{-m},
\end{equation}
and writing $f(p_{m}/q_{m})=p_{m}^{\prime}/q_{m}^{\prime}$ 
in lowest terms, we have $q_{m}^{\prime}\leq q_{m}^{\psi(m)}$.
\end{itemize}
Then $f(\mathscr{L}\cap I)\subseteq \mathscr{L}$.
\end{theorem}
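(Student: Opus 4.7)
The plan is to fix $\zeta \in \mathscr{L} \cap I$ and exhibit, for every $N$, infinitely many distinct rationals $p_m'/q_m'$ satisfying $|f(\zeta) - p_m'/q_m'| \leq (q_m')^{-N}$; this will yield $\mu(f(\zeta)) = \infty$ and hence $f(\zeta) \in \mathscr{L}$. The natural candidates are $p_m'/q_m' := f(p_m/q_m)$ written in lowest terms, which are rational thanks to the standing assumption $f(\mathbb{Q}\cap I)\subseteq\mathbb{Q}$ and whose denominators are controlled by the hypothesis $q_m' \leq q_m^{\psi(m)}$.

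First I would collect some preparatory facts about the $\zeta$-side approximants. The inequality $|\zeta - p_m/q_m|\le q_m^{-m}$ with $q_m\ge 2$ implies $p_m/q_m \to \zeta$, and since $\zeta$ is irrational necessarily $q_m \to \infty$ (a bounded subsequence of denominators would trap $p_m/q_m$ in a finite set of rationals, precluding convergence to $\zeta$). Since $\zeta$ lies in the open set $I$ and $f$ is analytic, $f$ is Lipschitz with some constant $C$ on a compact neighborhood of $\zeta$; for $m$ large enough that $p_m/q_m$ lies in that neighborhood,
\[
\left|f(\zeta) - \frac{p_m'}{q_m'}\right| = \left|f(\zeta) - f\!\left(\frac{p_m}{q_m}\right)\right| \leq C\left|\zeta - \frac{p_m}{q_m}\right| \leq C\, q_m^{-m}.
\]
The denominator bound translates into $\log q_m \geq (\log q_m')/\psi(m)$, so
\[
\left|f(\zeta) - \frac{p_m'}{q_m'}\right| \leq C\,(q_m')^{-m/\psi(m)},
\]
and since $\psi(m) = o(m)$ the exponent $m/\psi(m)$ diverges. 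Consequently, once we know that $q_m'\to\infty$ along a suitable subsequence, every prescribed approximation rate $N$ is eventually beaten.

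The step I expect to require the most care is verifying that $\{p_m'/q_m'\}$ furnishes \emph{infinitely many distinct} approximations with unbounded denominators. I would argue by contradiction: suppose some rational $r = a/b$ equals $p_m'/q_m'$ for infinitely many indices $m$. Then $f(p_m/q_m) = r$ along those indices, so each such $p_m/q_m$ lies in the zero set of $g(x):=f(x)-r$, an analytic function on $I$ that is not identically zero because $f$ is non-constant. This zero set is discrete in $I$ and therefore has only finitely many elements in any compact neighborhood of $\zeta$; since $p_m/q_m \to \zeta$, infinitely many $p_m/q_m$ would have to coincide with one fixed rational root of $g$, forcing $\zeta$ to equal that root and contradicting the irrationality of $\zeta$. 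Hence $\{p_m'/q_m'\}$ takes infinitely many distinct values, and since they all lie in a bounded neighborhood of $f(\zeta)$, their denominators $q_m'$ are necessarily unbounded. Combined with the displayed estimate this gives $\mu(f(\zeta))\geq N$ for every $N$, so $f(\zeta)\in\mathscr{L}$, completing the proof.
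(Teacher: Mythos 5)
Your proof is correct and follows essentially the same route as the paper: both rely on the local Lipschitz bound (mean value estimate) $|f(\zeta) - p_m'/q_m'| \leq C q_m^{-m}$, the translation $q_m^{-m} \leq (q_m')^{-m/\psi(m)}$ via the denominator hypothesis, and the fact that a non-constant analytic function has an isolated zero set to rule out a degenerate situation. The one structural difference is in how the rational case is excluded. The paper assumes $f(\zeta)=l_1/l_2\in\mathbb{Q}$ directly, applies the Identity Theorem to $f - f(\zeta)$ to get $f(p_m/q_m)\neq f(\zeta)$ for large $m$, and then contradicts the estimate with the lower bound $|l_1/l_2 - p_m'/q_m'|\geq 1/(q_m' l_2)$. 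You instead apply the discreteness argument to $f-r$ for any rational $r$ that could recur, conclude that the values $p_m'/q_m'$ are infinitely many distinct rationals with $q_m'\to\infty$ along a subsequence, and thereby get $\mu(f(\zeta))=\infty$ directly (which, under the paper's convention $\mu(p/q)=0$, already precludes rationality of $f(\zeta)$). The two exclusion arguments are of the same depth and use the same tool; yours packages it as a distinctness claim about the approximants, while the paper's phrases it as a contradiction with an assumed rational target. One small point worth tightening in your write-up: after establishing that $q_m'$ is unbounded, pass to a subsequence where $q_m'$ is strictly increasing so that the corresponding lowest-terms fractions $p_m'/q_m'$ are genuinely distinct; this makes the final ``infinitely many good approximations'' conclusion airtight for every fixed $N$.
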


\begin{proof}
Let $\zeta\in{\mathscr{L}}$ arbitrary. Let $J\subseteq I$ be non-empty and compact. 
Then $U:=\max_{z\in{J}} \vert f^{\prime}(z)\vert$ is well-defined.
Since $\zeta\in{\mathscr{L}}$ we can write 
\[
\zeta=\frac{p_{m}}{q_{m}}+\epsilon_{m}, \qquad \vert \epsilon_{m}\vert\leq \frac{1}{U}q_{m}^{-m}
\]
for any integer $m\geq 1$ with coprime integers $p_{m},q_{m}$ where $q_{m}>0$. 
Say $f(p_{m}/q_{m})=p_{m}^{\prime}/q_{m}^{\prime}$, and by assumption 
$q_{m}^{\prime}\leq q_{m}^{\psi(m)}$. Now for $m$ sufficiently large that $p_{m}/q_{m}\in{J}$
the intermediate value theorem of differentiation gives
\begin{equation} \label{eq:widerlegen}
\left\vert f(\zeta)-\frac{p_{m}^{\prime}}{q_{m}^{\prime}}\right\vert
=\vert f(\zeta)-f(p_{m}/q_{m})\vert\leq U\vert\epsilon_{m}\vert\leq q_{m}^{-m}\leq q_{m}^{\prime -m/\psi(m)}.
\end{equation}
Since $\psi(m)=o(m)$, we conclude $\mu(f(\zeta))=\infty$
with $\mu$ the irrationality exponent unless
$f(\zeta)\in\mathbb{Q}$. To exclude
$f(\zeta)\in{\mathbb{Q}}$, assume the opposite and write $f(\zeta)=l_{1}/l_{2}$.
Since $f$ is not constant in $I$, by the Identity Theorem for analytic functions, 
see Theorem 3.7 and Corollary~3.10 in~\cite{conway}, there exists some neighborhood $W\ni{\zeta}$                                          
of $\zeta$ such that $f(z)\neq f(\zeta)$ for $z\in{W}$. 
Since $p_{m}/q_{m}$ converges to $\zeta$ as $m\to\infty$, we infer
$f(p_{m}/q_{m})\neq f(\zeta)$ for large $m$.  
Thus
\[
\left\vert f(\zeta)-f(p_{m}/q_{m})\right\vert=
\left\vert f(\zeta)-\frac{p_{m}^{\prime}}{q_{m}^{\prime}}\right\vert= 
\left\vert \frac{l_{1}}{l_{2}}-\frac{p_{m}^{\prime}}{q_{m}^{\prime}}\right\vert
 \geq \frac{1}{q_{m}^{\prime}l_{2}},
\]
which contradicts \eqref{eq:widerlegen} for large $m$ since $\psi(m)=o(m)$.
\end{proof}

We check that, as indicated above, rational functions with rational coefficients satisfy the conditions
of Theorem~\ref{anfang}. Let $f$ be such a function and $p,q$ integers. Then we can write
\[
f(p/q)= \frac{P(p,q)}{Q(p,q)}=\frac{p^{\prime}}{q^{\prime}}
\]
with fixed polynomials $P,Q\in{\mathbb{Z}[X,Y]}$ and $p^{\prime},q^{\prime}\in{\mathbb{Z}}$. 
Consider $\zeta\in{\mathscr{L}}$
fixed and let $p=p_{m}, q=q_{m}$ satisfy \eqref{eq:wertz} and put $p^{\prime}=p_{m}^{\prime},
q^{\prime}=q_{m}^{\prime}$. First observe that we may assume $q^{\prime}\neq 0$ since
$\zeta$ is transcendental and there are only finitely many algebraic poles of $f$, so there is no pole of $f$ 
(and hence $f$ is analytic) in a neighborhood of $\zeta$.
From \eqref{eq:wertz} we deduce $\vert p_{m}-\zeta q_{m}\vert<1$ 
and thus $p_{m}\asymp q_{m}$ with implied constants depending on $\zeta, P,Q$ but not on $m$. 
It follows that $q_{m}^{\prime}\ll q_{m}^{k}$ where $k$ 
is the degree of $Q$ and again the implied constant depends on $\zeta, P, Q$ only. 
Hence the constant function $\psi(z)=k+1$ (or $\psi(z)=k+\epsilon$ for any $\epsilon>0$)
satisfies the conditions of Theorem~\ref{anfang}.

Considering constant functions $\psi(z)$, we 
stem a corollary from Theorem~\ref{anfang} whose conditions do
not explicitly involve $\zeta$ but are solely conditions on
the image $f(\mathbb{Q})$. 

\begin{corollary} \label{bfang}
Suppose $f$ is non-constant analytic in some open interval 
$I\subseteq \mathbb{R}$ and $f(\mathbb{Q}\cap I)\subseteq \mathbb{Q}$.
Moreover, assume that there exists $\eta\in{\mathbb{R}}$ such that
\[
f(p/q)=p^{\prime}/q^{\prime}
\]
implies $q^{\prime}\leq q^{\eta}$ provided $(p,q)=1$, $(p^{\prime},q^{\prime})=1$ 
and $q\geq 2$. Then $f(\mathscr{L}\cap I)\subseteq \mathscr{L}$.
\end{corollary}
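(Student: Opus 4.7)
The plan is to deduce Corollary~\ref{bfang} as an immediate specialization of Theorem~\ref{anfang}, in which the auxiliary function $\psi$ is chosen to be a suitable positive constant. Concretely, I would set $\psi(m) := \max\{\eta,1\}$, which is a fixed positive real number independent of $m$; then trivially $\psi(m)=o(m)$ as $m\to\infty$, so the first bullet in the hypothesis of Theorem~\ref{anfang} is satisfied.

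For the second bullet, fix an arbitrary $\zeta\in\mathscr{L}\cap I$. Since $\mu(\zeta)=\infty$, for every integer $m\geq 1$ there exist coprime integers $p_m, q_m$ with $q_m\geq 2$ (in fact, with $q_m$ arbitrarily large, as the Liouville condition supplies infinitely many rational approximants achieving exponent $m$) such that
\[
\left|\zeta-\frac{p_m}{q_m}\right|\leq q_m^{-m}.
\]
Since $I$ is open and $p_m/q_m\to\zeta$, for all sufficiently large $m$ we have $p_m/q_m\in\mathbb{Q}\cap I$, so the first standing hypothesis of the corollary gives $f(p_m/q_m)\in\mathbb{Q}$. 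Writing $f(p_m/q_m)=p_m'/q_m'$ in lowest terms, the second standing hypothesis of the corollary yields $q_m'\leq q_m^{\eta}\leq q_m^{\psi(m)}$, the last inequality using $q_m\geq 2$ together with $\psi(m)\geq\eta$. (For the finitely many small $m$ for which $p_m/q_m$ might not yet lie in $I$, no hypothesis of Theorem~\ref{anfang} is compromised, since that theorem's proof only invokes the bullets for $m$ sufficiently large.)

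With both bullets verified, Theorem~\ref{anfang} immediately delivers $f(\mathscr{L}\cap I)\subseteq\mathscr{L}$, which is the conclusion of the corollary. The argument is essentially a bookkeeping exercise, and I do not anticipate any genuine obstacle: the corollary was formulated precisely so that taking $\psi$ constant suffices. The only minor point requiring attention is that Theorem~\ref{anfang} demands $\psi$ take values in $\mathbb{R}_{>0}$, which forces us to replace a possibly non-positive $\eta$ by $\max\{\eta,1\}$ in the choice of $\psi$; this substitution is harmless because the relevant inequality $q_m'\leq q_m^{\psi(m)}$ only becomes weaker.
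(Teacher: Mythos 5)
Your proposal is correct and follows essentially the same route as the paper: the paper's proof likewise consists of taking the approximations supplied by $\zeta\in\mathscr{L}$ and applying Theorem~\ref{anfang} with the constant function $\psi(m)=\eta$. Your extra care about replacing $\eta$ by $\max\{\eta,1\}$ and about $p_m/q_m$ eventually lying in $I$ is harmless bookkeeping already absorbed in the proof of Theorem~\ref{anfang}.
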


\begin{proof}
Since $\zeta\in{\mathscr{L}}$,  for any $m\geq 1$ there exist $p_{m},q_{m}$ with \eqref{eq:wertz}.
Apply for any such choice Theorem~\ref{anfang} with the constant function $\psi(m)=\eta$.
\end{proof}

Incorporating the additional condition of 
Theorem~\ref{anfang} or Corollary~\ref{bfang} for transcendental functions
seems difficult with the common methods, as used for instance in~\cite{huang} or
\cite{marques}. In this context, Theorem~1.2 in~\cite{marques} 
asserts that there exist entire transcendental functions with $q^{\prime}<q^{8q^{2}}$ 
in the notation of Corollary~\ref{bfang}. See also Theorem~2 in~\cite{mmoreira}
for a related result concerning the image of algebraic numbers of bounded height
under certain entire transcendental functions.

\section{Special classes of Liouville numbers}  \label{vieledefs}
We define a few interesting subclasses of $\mathscr{L}$. The first one, which is new and
will be considered in the main result Theorem~\ref{haupt},
is parametrized by real functions.

\begin{definition} \label{psi}
Let $\Phi$ be the set of all functions
$\varphi:\mathbb{R}_{\geq 2}\mapsto \mathbb{R}_{\geq 2}$ which are non-decreasing and
satisfy $\lim_{x\to\infty} \varphi(x)=\infty$. 
For $\varphi\in{\Phi}$ define $\mathscr{L}_{\varphi}$ the (possibly empty)
subclass of $\zeta\in{\mathscr{L}}$
for which for any given positive integer $N$, the estimate
\begin{equation} \label{eq:lhs}
-\frac{\log \Vert \zeta q\Vert}{\log q} \geq N
\end{equation}
has an integer solution $q=q(N)$ with $2\leq q\leq \varphi(N)$. Similarly,
let $\mathscr{L}_{\varphi}^{\ast}\supset \mathscr{L}_{\varphi}$ be the set of $\zeta\in{\mathscr{L}}$ for
which the condition holds for all $N\geq N_{0}(\zeta)$. 
\end{definition}

\begin{remark} \label{haha}
Observe that by Theorem~\ref{prop}, for $N\geq 2$ the smallest $q$ for which \eqref{eq:lhs} holds 
equals some denominator $t_{n}$ of a convergent of $\zeta$.
\end{remark}

\begin{remark}
Only evaluations of $\varphi\in{\Phi}$ at integers will be of importance, so
we could alternatively work with sequences. For $\varphi\in{\Phi}$ of low growth, the sets
$\mathscr{L}_{\varphi},\mathscr{L}_{\varphi}^{\ast}$ are indeed empty.
However, we will see soon that the sets are large for $\varphi$ of sufficiently fast growth.
\end{remark}

Define orderings on $\Phi$ by $\psi\leq \varphi$ (resp. $\psi\leq_{\ast} \varphi)$ 
if $\psi(x)\leq \varphi(x)$ for all $x\geq 2$ (resp. $x\geq x_{0}=x_{0}(\varphi,\psi))$. 
These relations are clearly reflexive and transitive. 
The relation $\leq$ is also antisymmetric and hence $(\Phi,\leq)$ is a partially ordered set.
Furthermore, the pointwise maximum 
of two functions lies above both functions in these partial orders, 
such that $(\Phi,\leq)$ and $(\Phi,\leq_{\ast})$ can be viewed as directed sets. 
Obviously $\psi\leq \varphi$ implies $\mathscr{L}_{\psi}\subseteq \mathscr{L}_{\varphi}$ 
and $\psi\leq_{\ast} \varphi$ implies $\mathscr{L}_{\psi}^{\ast}\subseteq \mathscr{L}_{\varphi}^{\ast}$, 
such that the set of all $\{\mathscr{L}_{\varphi}\}$ (resp. $\{\mathscr{L}_{\varphi}^{\ast}\}$),
partially ordered by inclusion, are directed sets as well.
For any $\zeta\in{\mathscr{L}}$, say $\mathscr{A}(\zeta)\subseteq \Phi$ is the set
of $\varphi\in{\Phi}$ such that $\zeta\in{\mathscr{L}_{\varphi}}$.
There is a unique $\varphi\in{\mathscr{A}(\zeta)}$ 
with the property that $\varphi\leq \psi$ for any $\psi\in{\mathscr{A}(\zeta)}$ 
(in particular $\mathscr{A}(\zeta)\neq \emptyset$). This function
is locally constant, right-continuous, has image in $\mathbb{Z}_{\geq 2}$ 
and increases in a discontinuous way at integer values $q$ where an estimate $\Vert \zeta q\Vert\leq q^{-N}$
for some integer $N>0$ is satisfied for ''the first time'' (for $q$ but no smaller integer). 
We call it the {\em minimum function} for $\zeta\in{\mathscr{L}}$.

\begin{example}
For $L$ as in \eqref{eq:a} for any integer $n\geq 1$ we have 
\[
\Vert 10^{n!}L\Vert\leq 10^{n!-(n+1)!}+2\cdot 10^{n!-(n+2)!}=10^{-n\cdot n!}+2\cdot 10^{n!-(n+2)!}
\]
and hence
\[
-\frac{\log \Vert 10^{n!}L\Vert}{\log 10^{n!}}=\frac{n\cdot n!\log 10 }{n!\log 10}(1+o(1/n))=n+o(1).
\]
The remainder term tends to $0$ fast, such that certainly 
$\varphi(x)=10^{(x+1)!}$ is a proper choice for which $L\in{\mathscr{L}_{\varphi}}$, 
where we extend the definition of the factorials 
to real numbers by $x!:=x(x-1)(x-2)\cdots(1+\{x\})$. 
\end{example}

\begin{example} \label{bsp}
For either $\varphi(x)=2^{(x!)!}$ or $\varphi(x)=2^{2^{x!}}$, it is easy to check 
all numbers of the form $L_{M}:=\sum_{j\geq 1} M^{-j!}$ for $M\geq 2$ an integer,
belong to $\mathscr{L}_{\varphi}^{\ast}$ simultaneously.
\end{example}

\begin{proposition} \label{uncountable}    
Let $\varphi\in{\Phi}$ for which $\mathscr{L}_{\varphi}\neq \emptyset$,
for example the minimum function of arbitrary $\zeta\in{\mathscr{L}}$.
Then the set $\mathscr{L}_{\varphi}$ is uncountable. Moreover,
for any non-empty open interval $J$ the set $\mathscr{L}_{\varphi}^{\ast}\cap J$ is uncountable.
\end{proposition}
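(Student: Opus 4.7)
The strategy is a Cantor-type construction of uncountably many Liouville numbers with the witnessing property. Fix $\zeta_0\in\mathscr{L}_\varphi$ with continued fraction $[r_0;r_1,r_2,\ldots]$ and convergents $s_n/t_n$. By Remark~\ref{haha} and Theorem~\ref{cfe}, extract an infinite sequence of witnessing indices $n_1<n_2<\cdots$ and integers $N_k\to\infty$ such that $t_{n_k}\le\varphi(N_k)$, $r_{n_k+1}\ge t_{n_k}^{N_k-1}$, and every positive integer $N$ admits some $k$ with $t_{n_k}\le\varphi(N)$ and $N_k\ge N$; this rephrases $\zeta_0\in\mathscr{L}_\varphi$ in convergent language.

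Thin the witnessing subsequence so that $n_{k+1}\ge n_k+3$, creating at least one ``spacer'' position $n_k+2$ between consecutive witnessing events. For each binary sequence $\epsilon=(\epsilon_k)\in\{0,1\}^{\mathbb{N}}$ I would set $r_j^{(\epsilon)}=r_j+\epsilon_k$ when $j=n_k+2$ for some $k$ and $r_j^{(\epsilon)}=r_j$ otherwise, and let $\zeta^{(\epsilon)}=[r_0;r_1^{(\epsilon)},r_2^{(\epsilon)},\ldots]$. Uniqueness of the continued fraction expansion of an irrational ensures distinct $\zeta^{(\epsilon)}$ for distinct $\epsilon$, furnishing $2^{\aleph_0}$ candidates. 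For the second assertion, one prepends a finite continued fraction prefix $[a_0;a_1,\ldots,a_m]$ with $m$ large enough that the $m$-th cylinder is contained in $J$ (achievable by any rational interior to $J$ and $m$ so large that $1/t_m^2<\mathrm{dist}(s_m/t_m,\partial J)$), guaranteeing each $\zeta^{(\epsilon)}\in J$.

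To verify $\zeta^{(\epsilon)}\in\mathscr{L}_\varphi^*$, note that $t_{n_k}$ and $r_{n_k+1}$ depend only on $r_0,\ldots,r_{n_k+1}$, among which only the spacers $n_j+2$ with $j<k$ have been altered. A single modification $r_j\mapsto r_j+1$ at most doubles every later $t_m$, via $t_j^{\rm new}=t_j+t_{j-1}\le 2t_j$ propagated through the recursion $t_{m+1}=r_{m+1}t_m+t_{m-1}$. Sparsifying further so that $\sum_j 1/r_{n_j+2}<\infty$ bounds the cumulative factor $t_{n_k}^{(\epsilon)}/t_{n_k}\le\prod_{j<k}(1+1/r_{n_j+2})$ by a uniform constant $C$, and the same bound holds for $t_{n_k+1}^{(\epsilon)}/t_{n_k+1}$. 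Theorem~\ref{cfe} then yields $\|\zeta^{(\epsilon)}t_{n_k}^{(\epsilon)}\|<1/t_{n_k+1}^{(\epsilon)}\le C^{N_k+1}(t_{n_k}^{(\epsilon)})^{-N_k}$; since $t_{n_k}^{(\epsilon)}\to\infty$ this translates to an effective witnessing level $N_k(1-o(1))$ at a denominator $\le C\,\varphi(N_k)\le\varphi(N_k+\kappa)$ for a bounded shift $\kappa$, placing $\zeta^{(\epsilon)}\in\mathscr{L}_\varphi^*$. For the first assertion, one restricts the modifications to spacer positions whose partial quotient $r_{n_k+2}$ is already very large, so that the multiplicative factors become $1+o(1)$ and are absorbed by the pre-existing slack $\varphi(N_k)/t_{n_k}$ of $\zeta_0$'s witnesses, preserving membership in $\mathscr{L}_\varphi$ itself.

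The main obstacle is precisely controlling the cumulative multiplicative error $t_{n_k}^{(\epsilon)}/t_{n_k}$ across infinitely many spacer modifications: a careless construction inflates this ratio exponentially and so violates $t_{n_k}^{(\epsilon)}\le\varphi(N_k)$. The sparsification enforcing $\sum 1/r_{n_j+2}<\infty$, combined with either the mild shift absorbable by $\mathscr{L}_\varphi^*$ or the finer matching needed for $\mathscr{L}_\varphi$, is the key technical compromise underlying the proof.
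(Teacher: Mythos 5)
Your branching device goes in the wrong direction for Definition~\ref{psi}. Membership in $\mathscr{L}_{\varphi}$ requires, for \emph{every} $N$, a witness $q\le\varphi(N)$ of \eqref{eq:lhs}, i.e.\ an \emph{upper} bound on denominators, and by Remark~\ref{haha} the least witness is a convergent denominator. In the very case highlighted in the statement, namely $\varphi$ the minimum function of $\zeta_{0}$, one has $\varphi(N_{k})=t_{n_{k}}$ exactly: the slack $\varphi(N_{k})/t_{n_{k}}$ you rely on equals $1$. Increasing any partial quotient $r_{j}\mapsto r_{j}+1$ forces, via \eqref{eq:contf}, $t_{m}^{(\epsilon)}\ge t_{m}+t_{j-1}>t_{m}$ for all $m\ge j$, so every later witnessing denominator of $\zeta^{(\epsilon)}$ exceeds $\varphi(N_{k})$, while no smaller convergent can take over the witnessing role (none does for $\zeta_{0}$ by minimality of $\varphi$, and the perturbation changes the exponents $\log t_{m+1}^{(\epsilon)}/\log t_{m}^{(\epsilon)}$ only by $O(1/\log t_{m})$). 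Hence any $\epsilon$ with at least one modification yields $\zeta^{(\epsilon)}\notin\mathscr{L}_{\varphi}$, and since the failure occurs at all levels $N$ beyond the first altered index, $\zeta^{(\epsilon)}\notin\mathscr{L}_{\varphi}^{\ast}$ either. The two devices meant to absorb the loss do not hold up: the inequality $C\varphi(N_{k})\le\varphi(N_{k}+\kappa)$ with a bounded shift $\kappa$ is false for general $\varphi\in\Phi$ (minimum functions are step functions with arbitrarily long constancy intervals), and spacer positions with ``already very large'' partial quotients need not exist at all (every non-witnessing partial quotient of $\zeta_{0}$ may equal $1$); even a factor $1+o(1)$ is fatal when the slack is $1$, because denominators are integers. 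The same objection applies to the finite prefix you prepend to land in $J$, which multiplies all subsequent denominators by a constant factor.

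The paper avoids this by moving in the opposite direction: instead of enlarging partial quotients it \emph{deletes} selected partial quotients $r_{j(n)+1}$ immediately following witnessing convergents (with a case distinction ensuring infinitely many large partial quotients survive, so $\zeta_{T}\in\mathscr{L}$ by Theorem~\ref{cfe} and \eqref{eq:contf}). Deletion can only decrease the convergent denominators and does not decrease the relevant exponents at the witnessing places, so the minimum function of $\zeta_{T}$ stays $\le\varphi$ and $\zeta_{T}\in\mathscr{L}_{\varphi}$ for each of the uncountably many deletion sets $T$; the statement about $\mathscr{L}_{\varphi}^{\ast}\cap J$ is then obtained by altering initial partial quotients. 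If you wish to keep a Cantor-type scheme, its branches must never increase the denominators of the witnessing convergents; as written, the construction fails precisely in the zero-slack case the proposition explicitly includes.
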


\begin{proof}    
Say $\zeta=[r_{0};r_{1},r_{2},\ldots]$ belongs to $\mathscr{L}_{\varphi}$. By the properties
we have established, we may assume $\varphi$ is the minimum function of $\zeta$.
                                              
By Remark~\ref{haha}, any rise of the locally constant minimum function of $\zeta$
is induced by some convergent (in general not every convergent induces a rise). It is also obvious
that there are infinitely many rises since $\zeta\in{\mathscr{L}}$. 
Define the subsequence $j(n)$ of $\{0,1,2,\ldots\}$ such that
the $n$-th rise is induced by $s_{j(n)}/t_{j(n)}=[r_{0},r_{1},\ldots,r_{j(n)}]$,
i.e. $q=t_{j(n)}$ but no smaller integer satisfies \eqref{eq:lhs} for some integer $N$.
Then $r_{j(n)+1}$ is large.
For suitable subsets $T\subseteq\{j(1),j(2),\ldots\}$ the numbers
$\zeta_{T}$ defined as the number that arises from $\zeta$ by deleting precisely those 
partial quotients $r_{i}$ for which $i-1\in{T}$ will satisfy the claim.
We distinguish two cases. Case 1: There are arbitrarily large $i$ such that $j(i+1)>j(i)+1$ strictly.
Then we allow to delete those coefficients $j(i)+1$, i.e. $j(i)\in T$, for which this inequality holds.
Moreover we do not delete $j(i)+1$ for all large $i$, i.e. $T^{c}$ is infinite.
By virtue of \eqref{eq:contf} and Theorem~\ref{cfe} and since $T^{c}$ is infinite,
one checks that $\zeta_{T}\in{\mathscr{L}}$. On the other hand, the recurrence
\eqref{eq:contf} implies $\psi\leq \varphi$ for $\psi$ the minimum function of $\zeta_{T}$. 
Hence $\zeta_{T}\in{\mathscr{L}_{\psi}}\subseteq{\mathscr{L}_{\varphi}}$. Since there are uncountably
many choices for $T$ and the continued fraction expansion is uniquely determined, 
this yields uncountably many elements in $\mathscr{L}_{\varphi}$.
Case 2: For all sufficiently large $i\geq i_{0}$ we have $j(i+1)=j(i)+1$. In this case one may 
delete any subset of partial quotients of index greater than $i_{0}$. The properties
can be inferred similarly as in case 1 with \eqref{eq:contf}. 
The assertion on $\mathscr{L}_{\varphi}^{\ast}\cap J$ can be inferred from the above by altering
initial partial quotients, which only yields a rational transformation of $\zeta$.   
\end{proof}

Unfortunately, for any given $\varphi\in{\Phi}$
it is not hard to construct continued fraction expansions of
elements in $\mathscr{L}\setminus \mathscr{L}_{\varphi}$ either, 
such that $\mathscr{L}_{\varphi}\subsetneq \mathscr{L}$. 
It suffices to choose many successive small
partial quotients between rather large ones, such that the maximum of 
the left hand side in \eqref{eq:lhs} for bounded $q$
tends to infinity slower than $\varphi$. More generally, a diagonal method argument shows that 
there is no representation of $\mathscr{L}$ as a countable union of classes $\mathscr{L}_{\varphi}$.
However, obviously $\mathscr{L}$ can be written as the uncountable union 
$\cup_{\zeta\in{\mathscr{L}}} \mathscr{L}_{\varphi(\zeta)}$ where
$\varphi(\zeta)$ is the minimum function of $\zeta\in{\mathscr{L}}$.

We compare the classes $\mathscr{L}_{\varphi}$ with certain other subclasses 
of $\mathscr{L}$ that have been studied.
LeVeque~\cite{leveque} introduced strong Liouville numbers.
This concept was refined by Alniacik~\cite{alniacik}
who defined semi-strong Liouville numbers. The following definition comprises 
these concepts and some additional ones that fit our purposes.

\begin{definition} \label{defilio}
For $\zeta\in{\mathscr{L}}$ denote $s_{n}/t_{n}$ ($n\geq 0$) the sequence of its convergents.
Then $\zeta$ is called {\em semi-strong} if  
one can find a subsequence $(v_{i})_{i\geq 0}$ of $\{0,1,2\ldots\}$ with the properties
\begin{align}
&\left\vert t_{v_{i}}\zeta-s_{v_{i}}\right\vert= t_{v_{i}}^{-\omega(v_{i})}, 
\qquad \lim_{i\to\infty} \omega(v_{i})=\infty, \label{eq:ast}  \\
&\limsup_{i\to\infty} \frac{\log t_{v_{i+1}}}{\log t_{v_{i}+1}}<\infty. \label{eq:rein}
\end{align}
It is called {\em strong} if \eqref{eq:ast} is true for $v_{i}=i$ 
(note \eqref{eq:rein} is trivial then). 
Denote the sets of semi-strong (resp. strong) Liouville numbers by $\mathscr{L}^{ss}$ (resp. $\mathscr{L}^{s})$. 
Further for any non-decreasing function $\Lambda: \mathbb{R}_{\geq 1}\mapsto \mathbb{R}_{\geq 1}$ 
with $\lim_{x\to\infty} \Lambda(x)=\infty$, let $\mathscr{L}^{s,\Lambda}\subseteq \mathscr{L}^{s}$ 
(resp. $\mathscr{L}^{ss,\Lambda}\subseteq \mathscr{L}^{ss}$) be the sets 
for which $\omega(v_{i})\geq \Lambda(i)$ for some sequence $(v_{i})_{i\geq 1}$
as above. 
\end{definition}

Conversely to the sets $\mathscr{L}_{\varphi}$, the sets $\mathscr{L}^{s,\Lambda}$ and $\mathscr{L}^{ss,\Lambda}$
get smaller the faster $\Lambda$ tends to infinity. For any $\Lambda$ as in Definition~\ref{defilio}, 
choosing the partial quotients sufficiently large, it is easy to check  all defined sets 
are non-empty (in fact uncountable).

It is not hard to see $\mathscr{L}^{s}\subsetneq\mathscr{L}^{ss}\subsetneq \mathscr{L}$.
Unfortunately (in view of Section~\ref{constset}), for any given $\varphi\in{\Phi}$, there exist (semi-)strong 
Liouville numbers not contained in $\mathscr{L}_{\varphi}^{\ast}$, i.e. 
$\mathscr{L}^{s}\nsubseteq \mathscr{L}_{\varphi}^{\ast}$. To ensure inclusion we need
some (arbitrarily weak) additional minimum growth condition on the sequence $\omega(v_{i})$ in \eqref{eq:ast}.

\begin{proposition} \label{hilfspro}
Fix any function $\Lambda$ as in Definition~{\upshape\ref{defilio}}. Then
there exists $\varphi=\varphi(\Lambda)\in{\Phi}$ such that
$\mathscr{L}^{s,\Lambda}\subseteq \mathscr{L}_{\varphi}$.
Furthermore, there exists $\psi=\psi(\Lambda)\in{\Phi}$ for which
$\mathscr{L}^{s,\Lambda}\subseteq \mathscr{L}^{ss,\Lambda}\subseteq \mathscr{L}_{\psi}^{\ast}$.
\end{proposition}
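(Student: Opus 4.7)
The plan is to treat the two inclusions separately. The inclusion $\mathscr{L}^{s,\Lambda}\subseteq\mathscr{L}^{ss,\Lambda}$ is immediate from the definitions upon taking $v_i=i$, which makes \eqref{eq:rein} trivial; hence I focus on producing $\varphi$ and $\psi$. Both will be built from the auxiliary quantity $n^{\ast}(N):=\min\{n\geq 1:\Lambda(n)\geq N\}$, which depends only on $\Lambda$ and tends to infinity as $N\to\infty$.

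For the strong inclusion I fix $\zeta\in\mathscr{L}^{s,\Lambda}$ and, after subtracting $\lfloor\zeta\rfloor$ and if necessary replacing $\zeta$ by $1-\zeta$, assume $\zeta\in(0,1/2)$, so that $t_1=r_1\geq 2$ and every $\omega(n)$ is well defined. For a given $N\geq 1$, I let $n_0$ be the least index with $\omega(n_0)\geq N$; since $\omega(n)\geq\Lambda(n)$ this forces $n_0\leq n^{\ast}(N)$, and the natural witness is $q:=t_{n_0}$, for which $\Vert\zeta q\Vert=t_{n_0}^{-\omega(n_0)}\leq q^{-N}$ by the very definition of $\omega(n_0)$. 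It remains to bound $t_{n_0}$ by a function of $\Lambda$ and $N$ alone. Minimality of $n_0$ gives $\omega(j)<N$ for $1\leq j<n_0$, so Theorem~\ref{cfe} yields $t_{j+1}<t_j^{\omega(j)}<t_j^N$, which iterated gives $t_{n_0}<t_1^{N^{n^{\ast}(N)-1}}$. The hard part is controlling $t_1=r_1$, which can be arbitrarily large. I resolve this by a dichotomy: if $\zeta<2^{-N-1}$, then $\Vert 2\zeta\Vert=2\zeta<2^{-N}$ and $q=2$ is already a witness; otherwise $r_1<2^{N+1}$ and the iterated bound becomes $t_{n_0}<2^{(N+1)N^{n^{\ast}(N)-1}}$. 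The choice $\varphi(N):=2^{(N+1)N^{n^{\ast}(N)}}$ then lies in $\Phi$, depends only on $\Lambda$, and establishes $\zeta\in\mathscr{L}_{\varphi}$.

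For the semi-strong inclusion I run a parallel argument along $(v_i)$, replacing the direct Lagrange step between consecutive convergents with the hypothesis \eqref{eq:rein}. I fix $K=K(\zeta)>0$ and $i_1=i_1(\zeta)$ such that $t_{v_{i+1}}\leq t_{v_i+1}^K$ for $i\geq i_1$, and set $i^{\ast}(N):=\min\{i:\Lambda(i)\geq N\}$, $i_0:=\min\{i:\omega(v_i)\geq N\}$; then $i_0\leq i^{\ast}(N)$. For $N$ sufficiently large in terms of $\zeta$ one has $i_0>i_1$, and for $i_1\leq i<i_0$ minimality together with Theorem~\ref{cfe} yields $t_{v_{i+1}}\leq t_{v_i+1}^K<t_{v_i}^{K\omega(v_i)}<t_{v_i}^{KN}$, iterating to $t_{v_{i_0}}<t_{v_{i_1}}^{(KN)^{i^{\ast}(N)-i_1}}$. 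A choice such as $\psi(N):=2^{N^{2i^{\ast}(N)}}$, depending only on $\Lambda$, then dominates this bound once $N_0(\zeta)$ is chosen large enough to absorb the $\zeta$-dependent quantities $K$, $i_1$ and $t_{v_{i_1}}$, so that $q:=t_{v_{i_0}}$ is a valid witness and $\zeta\in\mathscr{L}_{\psi}^{\ast}$.

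The principal obstacle in both arguments is the absence of a uniform upper bound on the initial segment of the continued fraction of $\zeta$. In the strong case, where the conclusion must hold for every $N\geq 1$, it is circumvented by the explicit dichotomy that permits the direct witness $q=2$ when $\zeta$ is very close to an integer; in the semi-strong case the analogous initial-segment constants simply dissolve into the $\zeta$-dependent quantity $N_0(\zeta)$ permitted by the starred definition of $\mathscr{L}_{\psi}^{\ast}$.
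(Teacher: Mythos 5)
Your argument is correct, and its skeleton is the same as the paper's: both proofs bound the denominator of a convergent witnessing \eqref{eq:lhs} whose index is at most $\lceil\Lambda^{-1}(N)\rceil$, using Theorem~\ref{cfe} together with \eqref{eq:contf} to control how fast the $t_j$ can grow while the exponents $\omega(j)$ stay below $N$, which yields an iterated-exponential $\varphi$ depending only on $\Lambda$; and both treat the semi-strong case by invoking \eqref{eq:rein} and absorbing all $\zeta$-dependent constants into the threshold $N_0(\zeta)$ permitted in the definition of $\mathscr{L}_{\psi}^{\ast}$. The bookkeeping differs, though: the paper fixes in advance a threshold tower $T_1=N+1$, $T_{j+1}=T_j^{N+1}$ and splits according to whether some $t_j$ exceeds $T_j$, taking as witness either $t_{\iota_N}$ or the convergent just before the first violation (the big jump forces $\Vert t_{j-1}\zeta\Vert<t_{j-1}^{-N}$ by Lagrange), whereas you iterate $t_{j+1}<t_j^{\omega(j)}<t_j^{N}$ directly up to the first index $n_0$ with $\omega(n_0)\geq N$ and take $q=t_{n_0}$. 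Your extra device for the first partial quotient — normalizing to $(0,1/2)$ and using $q=2$ when $\zeta<2^{-N-1}$, else $r_1\leq 2^{N+1}$ — is in fact more careful than the paper: its second case at $j=1$ would nominally produce the disallowed witness $q=t_0=1$, an edge case your dichotomy eliminates. One step you should make explicit: the normalizations $\zeta\mapsto\{\zeta\}$ and $\zeta\mapsto 1-\{\zeta\}$ leave $\Vert q\zeta\Vert$ unchanged and preserve the hypothesis $\omega(n)\geq\Lambda(n)$ (for $1-\{\zeta\}$ the convergent denominators shift by one index, so the new exponents are $\omega(n+1)\geq\Lambda(n+1)\geq\Lambda(n)$ by monotonicity of $\Lambda$); this is routine but is currently used tacitly.
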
 

\begin{proof}   
First we construct $\varphi\in{\Phi}$ such that $\mathscr{L}^{s,\Lambda}\subseteq \mathscr{L}_{\varphi}$
and prove this rigorously, subsequently we sketch how to derive 
the other inclusion in a similar way. 

Consider an arbitrary but fixed integer $N\geq 1$. 
We will construct suitable $\varphi(N)$.
Let $\iota_{N}:=\lceil \Lambda^{-1}(N)\rceil$, i.e. 
the smallest index $i$ such that $\Lambda(i)\geq N$.
Consider integers $T_{1},\ldots,T_{\iota_{N}}$ given by the recurrence 
relation $T_{0}=1, T_{1}=N+1$ and $T_{j+1}= T_{j}^{N+1}$
for $1\leq j\leq \iota_{N}-1$ and put $D_{N}:=T_{\iota_{N}}$. We show that $\varphi(N):=D_{N}$
is a suitable choice. We use the notation of Section~\ref{cfr} for the continued
fraction expansion of $\zeta$. First assume all partial
denominators $t_{1},\ldots,t_{N}$ of the convergents of some $\zeta$ are bounded 
by $t_{j}\leq T_{j}$.
It follows from \eqref{eq:contf} that $t_{\iota_{N}}\leq T_{\iota_{N}}=D_{N}$, but on the
other hand the inequality $\vert t_{j}\zeta-s_{j}\vert<t_{j}^{-N}$ is satisfied for the index $j=\iota_{N}$ 
by definition of $\iota_{N}$. Thus if we put $q=T_{\iota_{N}}$ in Definition~\ref{psi} we see
$\varphi(N):=D_{N}$ is indeed a proper choice.
On the other hand, if for some $1\leq j\leq \iota_{N}-1$ we have $t_{j}>T_{j}$, 
then again by \eqref{eq:contf} and Theorem~\ref{cfe} we infer $\vert t_{j-1}\zeta-s_{j-1}\vert<t_{j-1}^{-N}$, 
and if $j$ is the smallest such index then moreover $t_{j-1}\leq D_{N}$.
Again this shows we may put $q=q(N)=t_{j-1}$ in Definition~\ref{psi} and $\varphi(N):=D_{N}$ is a proper choice.

For the inclusion $\mathscr{L}^{ss,\Lambda}\subseteq \mathscr{L}_{\psi}^{\ast}$ 
construct $\psi(N)=D_{N}$ as above, with the replacement $T_{j+1}:=T_{j}^{j(j+1)}$ in each inductive step. 
Observe that for any $\zeta\in{\mathscr{L}^{ss}}$, condition \eqref{eq:rein} guarantees 
we will have $t_{\iota_{N}}<T_{\iota_{N}}=:D_{N}$ for sufficiently large $N=N(\zeta)$. 
\end{proof}

Conversely, it can be shown that for any fixed $\varphi\in{\Phi}$ we 
have $\mathscr{L}_{\varphi}\nsubseteq \mathscr{L}^{ss}$.
We will not need this, though. Another subclass of $\mathscr{L}$ was recently defined  
in~\cite{marques}.

\begin{definition}
Recursively define $\exp^{[0]}(x):=x$ and $\exp^{[k+1]}(x)=\exp(\exp^{[k]})(x)$.
Then $\zeta\in{\mathscr{L}}$ is called {\em ultra-Liouville} if for any $k\geq 0$ there
exists a rational number $p/q$ such that
\begin{equation} \label{eq:ultra}
\left\vert \zeta-\frac{p}{q}\right\vert \leq \frac{1}{\exp^{[k]}(q)}.
\end{equation}
We denote the set of ultra-Liouville numbers by $\mathscr{L}_{ultra}$.
\end{definition}

Theorem~1.1 in~\cite{marques}, which relies on Theorem~1.2 in~\cite{marques}
mentioned at the end of Section~\ref{present}, asserts the following.

\begin{theorem}[Marques, Moreira]   \label{moreira}
There exist uncountably many entire transcendental functions $f$ such that 
$f(\mathscr{L}_{ultra})\subseteq \mathscr{L}_{ultra}$. In particular 
$f(\mathscr{L}_{ultra})\subseteq \mathscr{L}$.
\end{theorem}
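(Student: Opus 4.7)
The plan is to lift the existence theorem for entire transcendental functions with $f(\mathbb{Q})\subseteq \mathbb{Q}$ and the denominator bound $q'\leq q^{8q^{2}}$ (Theorem~1.2 of~\cite{marques}, quoted at the end of Section~\ref{present}) to the level of $\mathscr{L}_{ultra}$, in the same mean-value-theorem spirit as the proof of Theorem~\ref{anfang}. The point is that ultra-Liouville approximations decay faster than any tower of exponentials, so the sub-doubly-exponential denominator blow-up $q^{8q^{2}}$ is harmless. Uncountability of the resulting family is inherited directly from the uncountable family provided by Theorem~1.2 of~\cite{marques}.

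Fix such an $f$ and an arbitrary $\zeta\in{\mathscr{L}_{ultra}}$. Choose a compact interval $J$ with $\zeta$ in its interior and set $U=\max_{z\in{J}}|f'(z)|$. I will verify the defining property of $\mathscr{L}_{ultra}$ for $f(\zeta)$: given a target $k\geq 0$, set $k^{\ast}=k+3$ and apply the ultra-Liouville property at level $k^{\ast}$ to produce coprime integers $p,q$ with $q\geq 2$ and $|\zeta-p/q|\leq 1/\exp^{[k^{\ast}]}(q)$. For $k^{\ast}$ large the denominator $q$ is forced to be large (since $\zeta$ is irrational and $\|\zeta q\|$ is bounded below for each fixed $q$), so $p/q\in{J}$. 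Write $f(p/q)=p'/q'$ in lowest terms; by hypothesis $q'\leq q^{8q^{2}}$. The mean value theorem on $J$ yields
\[
\left|f(\zeta)-\frac{p'}{q'}\right|=|f(\zeta)-f(p/q)|\leq U\cdot|\zeta-p/q|\leq \frac{U}{\exp^{[k^{\ast}]}(q)}.
\]

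The main technical step, elementary but the principal place where care is needed, is to check that the right-hand side is bounded by $1/\exp^{[k]}(q')$. Since $q^{8q^{2}}=\exp(8q^{2}\log q)\leq \exp(e^{q})=\exp^{[2]}(q)$ for all large $q$, one has $q'\leq \exp^{[2]}(q)$, hence $\exp^{[k]}(q')\leq \exp^{[k+2]}(q)=\exp^{[k^{\ast}-1]}(q)$. The inequality $\exp(x)\geq Ux$, valid for all sufficiently large $x$, then gives $U\cdot\exp^{[k^{\ast}-1]}(q)\leq \exp^{[k^{\ast}]}(q)$, and combining these two bounds yields $|f(\zeta)-p'/q'|\leq 1/\exp^{[k]}(q')$, as required.

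It remains to exclude $f(\zeta)\in{\mathbb{Q}}$, and this is handled verbatim as in the last paragraph of the proof of Theorem~\ref{anfang}: if $f(\zeta)=l_{1}/l_{2}$, then since $f$ is non-constant the Identity Theorem forces $f(p/q)\neq f(\zeta)$ for all $q$ large, whence $|f(\zeta)-p'/q'|\geq 1/(q'l_{2})$, contradicting the super-exponential decay established above once $k^{\ast}$ is taken large. The only real obstacle is the bookkeeping in the tower $\exp^{[\cdot]}$, and with the shift $k\mapsto k^{\ast}=k+3$ the crude estimate $q^{8q^{2}}\ll \exp^{[2]}(q)$ is comfortably strong enough to absorb both the constant $U$ and the single lost iterate of $\exp$.
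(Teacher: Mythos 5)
This statement is not proved in the paper at all: it is Theorem~1.1 of \cite{marques}, quoted as known, together with the remark that it rests on Theorem~1.2 of \cite{marques}; your argument reconstructs exactly that derivation (mean value theorem plus the observation that, for $q$ large, the denominator bound $q'\leq q^{8q^{2}}\leq \exp^{[2]}(q)$ is absorbed by shifting the exponential tower by three iterates), and the bookkeeping is correct. The two points you leave implicit are harmless but worth a line: it suffices to verify \eqref{eq:ultra} for $f(\zeta)$ for all sufficiently large $k$, since a witness at level $k$ also works at every smaller level by monotonicity of $k\mapsto\exp^{[k]}(q)$, and for $k^{\ast}$ large the reduced witness automatically has $q\geq 2$ and $q$ large because $\zeta$ is irrational, so the hypothesis $q\geq 2$ in the quoted bound from \cite{marques} is indeed available.
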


It is important to get noticed that the previous result is strong in the sense that it ensures the existence of an uncountable subset of Liouville numbers which is invariant for uncountable many transcendental analytic functions. 

It is not hard to check that there exist functions $\varphi\in{\Phi}$ 
for which $\mathscr{L}_{\varphi}\nsubseteq \mathscr{L}_{ultra}$. 
It suffices to take $\varphi$ the minimum function of any $\zeta\in{\mathscr{L}}$
for which we cannot find a rational for which \eqref{eq:ultra} holds for $k=1$ (or any larger $k$), 
which clearly exists. Conversely, one checks 
$\mathscr{L}_{ultra}\nsubseteq \mathscr{L}_{\varphi}^{\ast}$ for any fixed
function $\varphi\in{\Phi}$, as the frequency of values $q$ inducing very good approximations $p/q$ in \eqref{eq:ultra}
can be arbitrarily low. For similar reasons, also combination of the concepts (semi-)strong and ultra
is not sufficient to provide $\varphi$ with inclusion, in other words 
$\mathscr{L}_{ultra}\cap \mathscr{L}^{s}\nsubseteq \mathscr{L}_{\varphi}$ for any $\varphi\in{\Phi}$.
Moreover, there is no inclusion within $\mathscr{L}^{s}$ (resp.
$\mathscr{L}^{ss}$) and $\mathscr{L}_{ultra}$. 
Finally, we also want to refer to~\cite{mmoreira} for
a result similar to Theorem~\ref{moreira} concerning the image of more general  
sets (in general no longer subsets of $\mathscr{L}$). 
There is again no immediate correlation to Theorem~\ref{haupt}. 

\section{Entire transcendental functions with large invariant set}  \label{vergangen}

\subsection{Preparatory results}
We put our focus on entire functions $f$ now. 
We gather some results that we will utilize in the proof of Theorem~\ref{haupt}.
The following Lemma~\ref{grobabschaetzung} on its own leads to another proof 
of Theorem~\ref{maillet} in the special case of polynomials. In the proof 
we will use the following elementary fact.
For a real number $\alpha$ and a positive integer $k$ the estimate
\begin{equation} \label{eq:lio2}
\vert q\alpha-p\vert\leq q^{-\nu}
\end{equation}
implies
\begin{equation} \label{eq:lio}
\vert q^{k}\alpha^{k}-p^{k}\vert=\vert q\alpha-p\vert\cdot\vert q^{k-1}\alpha^{k-1}+\cdots+p^{k-1}\vert
\leq D(k,\alpha)q^{-\nu+k-1}
\end{equation}
with a constant $D(k,\alpha)$ depending only on $k$ and $\alpha$. 
This argument was actually used in a slightly more general way in the proof of
Lemma~1 in~\cite{bug} and will be frequently applied in Section~\ref{hochab} as well. 

\begin{lemma}  \label{grobabschaetzung}
Let $\alpha\in{\mathbb{R}}$ and $P\in{\mathbb{Q}[X]}$ given as 
\[
P(z)=\frac{a_{0}}{b_{0}}+\frac{a_{1}}{b_{1}}z+\cdots+\frac{a_{m}}{b_{m}}z^{m}
\]
with $a_{j}/b_{j}$ in lowest terms. 
Put $A:=\max_{0\leq j\leq m} \vert a_{j}\vert, B:=$\rm{lcm}($\vert b_{0}\vert,\ldots,\vert b_{m}\vert)$.
Assume for a positive integer $q$ and {\upshape(}large{\upshape)} $\nu>0$ we have 
\begin{equation}  \label{eq:qbe}
\Vert q\alpha\Vert \leq q^{-\nu}.
\end{equation}
Then $Bq^{m}\in{\mathbb{Z}}$ and 
\[
\Vert Bq^{m}\cdot P(\alpha)\Vert \leq m^{2}(1+\vert\alpha\vert)^{m-1}\cdot ABq^{-\nu+m-1}.
\]
\end{lemma}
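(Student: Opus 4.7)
The plan is to apply the elementary inequality \eqref{eq:lio} monomial-by-monomial in $P$ and then sum, after clearing denominators by multiplication by $B$. First, $Bq^m\in\mathbb{Z}$ is immediate since $B$ and $q$ are positive integers. From \eqref{eq:qbe} choose an integer $p$ with $|q\alpha-p|\le q^{-\nu}$. Since $b_j\mid B$ for every $j$, each coefficient $Ba_j/b_j$ is an integer, hence so is
\[
N := \sum_{j=0}^{m} \frac{Ba_j}{b_j}\, q^{m-j} p^j.
\]

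The strategy is to show directly that $|Bq^m P(\alpha)-N|$ is bounded by the asserted right-hand side, which then yields $\|Bq^m P(\alpha)\|\le |Bq^m P(\alpha)-N|$. Expanding $Bq^m P(\alpha)=\sum_{j} (Ba_j/b_j)\, q^{m-j}(q^j\alpha^j)$ and subtracting $N$, the $j=0$ contributions cancel and
\[
Bq^m P(\alpha)-N = \sum_{j=1}^{m} \frac{Ba_j}{b_j}\, q^{m-j}\bigl(q^j\alpha^j - p^j\bigr).
\]
For each summand I would invoke the factorization \eqref{eq:lio2}--\eqref{eq:lio}; since $|q\alpha-p|\le 1$ gives the crude bound $|p|\le q(1+|\alpha|)$, every term $(q\alpha)^{j-1-i}p^i$ in the telescoping expansion of $q^j\alpha^j-p^j$ has absolute value at most $q^{j-1}(1+|\alpha|)^{j-1}$. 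Combined with \eqref{eq:qbe} this gives
\[
\bigl|q^j\alpha^j-p^j\bigr| \le j(1+|\alpha|)^{j-1}\, q^{j-1-\nu},
\]
i.e.\ the constant $D(j,\alpha)$ in \eqref{eq:lio} can be taken as $j(1+|\alpha|)^{j-1}$.

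Finally, using $|Ba_j/b_j|\le AB$ and noting that the powers of $q$ telescope to $q^{m-j}\cdot q^{j-1-\nu}=q^{m-1-\nu}$ independently of $j$, one obtains
\[
|Bq^m P(\alpha)-N| \le AB\, q^{m-1-\nu} \sum_{j=1}^{m} j(1+|\alpha|)^{j-1} \le m^2(1+|\alpha|)^{m-1}\cdot AB\, q^{-\nu+m-1},
\]
the last step using $j\le m$ and $(1+|\alpha|)^{j-1}\le (1+|\alpha|)^{m-1}$ since $1+|\alpha|\ge 1$. This matches the stated bound. There is no real obstacle: the only delicate point is to use the \emph{same} integer $p$ for approximating every power $q^j\alpha^j$ by $p^j$ and to let \eqref{eq:lio} absorb the resulting loss of $q^{j-1}$, which is exactly why the common exponent $q^{m-1-\nu}$ appears uniformly in $j$.
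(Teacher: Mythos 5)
Your proof is correct and follows essentially the same route as the paper: both decompose $Bq^{m}P(\alpha)$ monomial by monomial and apply the telescoping bound \eqref{eq:lio} with $D(j,\alpha)\leq j(1+\vert\alpha\vert)^{j-1}$, arriving at the same constant $m^{2}(1+\vert\alpha\vert)^{m-1}AB$. The only cosmetic difference is that you exhibit the explicit nearby integer $N$ and bound $\vert Bq^{m}P(\alpha)-N\vert$, whereas the paper works directly with $\Vert\cdot\Vert$ via $\Vert M\beta\Vert\leq\vert M\vert\,\Vert\beta\Vert$ and subadditivity.
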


\begin{proof}
By definition $d_{k}:=\vert B/b_{k}\vert$ is an integer with $1\leq d_{k}\leq B$ for $0\leq k\leq m$. 
Recall that for any integer $M$ and $\alpha\in{\mathbb{R}}$ we have 
$\Vert M\alpha\Vert\leq \vert M\vert\cdot \Vert \alpha\Vert$.
For $0\leq k\leq m$ we estimate the monomial 
\begin{equation} \label{eq:langwirds}
\left\Vert Bq^{m}\frac{a_{k}}{b_{k}}\alpha^{k}\right\Vert=\left\Vert d_{k}a_{k}q^{m}\alpha^{k}\right\Vert 
\leq \vert a_{k}\vert d_{k}q^{m-k}\left\Vert q^{k}\alpha^{k}\right\Vert\leq ABq^{m-k}\left\Vert q^{k}\alpha^{k}\right\Vert.
\end{equation}
Moreover, for $k=0$ the left hand side of \eqref{eq:langwirds} is $0$,
which will improve the result slightly.
As $\nu$ is large and thus $p/q$ is very close to $\alpha$ for some $p\in{\mathbb{Z}}$,
we may apply \eqref{eq:lio} to estimate $\Vert q^{k}\alpha^{k}\Vert$ with the bound 
$D(k,\alpha)\leq k(1+\vert \alpha\vert)^{k-1}\leq m(1+\vert \alpha\vert)^{m-1}$ 
for any $1\leq k\leq m$. Since 
$\Vert \mu_{0}+\cdots+\mu_{m}\Vert\leq \Vert \mu_{1}\Vert+\cdots+\Vert \mu_{m}\Vert$ 
for all real $\mu_{0},\mu_{1},\ldots,\mu_{m}$ with $\mu_{0}\in\mathbb{Z}$,
we infer the lemma if we put $\mu_{k}$ the 
left hand sides of \eqref{eq:langwirds} for $0\leq k\leq m$.
\end{proof}

We will need an additional technical coprimeness result for special choices of coefficients $c_{j}$ 
in Lemma~\ref{grobabschaetzung} in the proof of Theorem~\ref{haupt}. 

\begin{proposition}  \label{relativprim}
Let $\alpha\in{\mathbb{R}}$ and $P\in{\mathbb{Q}[X]}$ as in Lemma~{\upshape\ref{grobabschaetzung}}
where $c_{j}=1/b_{j}$ and $b_{j}\vert b_{j+1}$ for $0\leq j\leq m-1$. Define $A,B$ as
in the lemma, such that $A=1, B=b_{m}$. 

There exists $\nu_{0}=\nu_{0}(P)$ which depends on $P$ but not on $q$, 
such that if $q\geq 2$ satisfies \eqref{eq:qbe} for $\nu\geq \nu_{0}$, 
and if for $p$ the closest integer to $q\alpha$
we have $(p,q)=1$, and if $R$ denotes the closest integer to $Bq^{m}\cdot P(\alpha)$, we have $(q,R)=1$.
\end{proposition}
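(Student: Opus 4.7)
My plan is to produce an explicit integer candidate $R'$ for $R$, check $(q,R')=1$ by a single modular reduction that exploits $d_m=1$, and then show $R = R'$ for $\nu$ sufficiently large via an approximation estimate in the spirit of Lemma~\ref{grobabschaetzung}.

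First I would set $d_k := B/b_k = b_m/b_k$, which is a positive integer for each $0\le k\le m$ thanks to the divisibility chain $b_j\mid b_{j+1}$, and in particular $d_m = 1$. Define
\[
R' \;:=\; \sum_{k=0}^{m} d_k\, p^{k} q^{m-k} \;=\; B q^m \, P(p/q) \;\in\; \mathbb{Z}.
\]
Every summand with $k<m$ carries a factor of $q$, so reducing modulo $q$ leaves $R' \equiv d_m p^m = p^m \pmod{q}$. Since $(p,q)=1$ by hypothesis, this yields $(R',q)=1$ immediately.

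It remains to verify $R = R'$ whenever $\nu \ge \nu_0(P)$. Writing
\[
Bq^m P(\alpha) - R' \;=\; \sum_{k=0}^{m} d_k\, q^{m-k}\bigl(q^k\alpha^k - p^k\bigr),
\]
the factorization $q^k\alpha^k - p^k = (q\alpha - p)\bigl(q^{k-1}\alpha^{k-1} + q^{k-2}\alpha^{k-2}p + \cdots + p^{k-1}\bigr)$ used to obtain \eqref{eq:lio}, together with $|q\alpha - p| = \|q\alpha\| \le q^{-\nu}$ and $d_k \le B$, produces a bound of the form $|Bq^m P(\alpha) - R'| \le C(P,\alpha)\, q^{m-1-\nu}$ with $C(P,\alpha)$ independent of $q$. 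Choosing $\nu_0$ large enough so that this quantity is strictly less than $1/2$ for every $q\ge 2$ forces $R$, defined as the closest integer to $Bq^m P(\alpha)$, to coincide with $R'$, and hence $(q,R)=1$.

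The whole argument is essentially bookkeeping, but the real content is the observation that the hypotheses $c_j = 1/b_j$ and $b_j\mid b_{j+1}$ are calibrated precisely so that the leading coefficient $d_m$ equals $1$; dropping either assumption would leave $R'\equiv d_m p^m \pmod q$ with $d_m$ possibly sharing a common factor with $q$, and coprimeness of $R$ and $q$ would in general fail. No substantial obstacle arises otherwise.
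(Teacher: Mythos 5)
Your proof is correct and follows essentially the same route as the paper: you identify the nearest integer to $Bq^{m}P(\alpha)$ as $Bq^{m}P(p/q)=\sum_{k=0}^{m}d_{k}p^{k}q^{m-k}$ via an \eqref{eq:lio}-type error bound and then reduce modulo $q$, using that $d_{m}=1$ and $(p,q)=1$, which is exactly the paper's argument. The only cosmetic difference is that you bound the total error against the single candidate integer once, whereas the paper sums the monomial-wise nearest integers $Z_{k}$; both hinge on the same identity and the same coprimality observation.
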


\begin{proof}
There exists some large $\nu_{1}=\nu_{1}(P)$ independent from $q$ such that for $\nu\geq \nu_{1}$, 
all left hand sides in \eqref{eq:langwirds} in the proof of Lemma~\ref{grobabschaetzung}
are sufficiently small to add up to a number smaller than $1/2$. 
Then $R$ equals the sum of the $m+1$ closest integers to the monomials
$Bq^{m}a_{k}/b_{k}\alpha^{k}$, call them $Z_{k}$. 
In view of \eqref{eq:lio}, we have
\[
q^{m}\alpha^{k}=q^{m-k}(q\alpha)^{k}=q^{m-k}p^{k}+q^{m-k}\Vert q\alpha\Vert^{k}
\]
is very close to $q^{m-k}p^{k}$ uniformly in $0\leq k\leq m$, provided 
$\Vert q\alpha\Vert$ is sufficiently small. More precisely, it is not hard to check
that if $\nu$ in \eqref{eq:qbe} satisfies $\nu\geq \nu_{2}$ with large $\nu_{2}=\nu_{2}(P)$
independent from $q$, again writing $d_{k}=B/b_{k}\in{\mathbb{Z}}$ for $0\leq k\leq m$, we have
\[
Z_{k}=q^{m-k}p^{k}a_{k}d_{k}=q^{m-k}p^{k}d_{k}, \qquad 0\leq k\leq m.
\]
Note that $d_{m}=1$ since $b_{m}=B$ follows from the divisibility 
conditions on the $b_{j}$. Combining these results, if we let 
$\nu\geq \nu_{0}$ in \eqref{eq:qbe} with $\nu_{0}:=\max\{\nu_{1},\nu_{2}\}$, we infer
\[
R=Z_{0}+\cdots+Z_{m}=q^{m}d_{0}+q^{m-1}pd_{1}+q^{m-2}p^{2}d_{2}+\cdots+qp^{m-1}d_{m-1}+p^{m}.
\]
Clearly, any prime divisor of $q$ divides any other expression in the sum but 
certainly not $p^{m}$ since $(p,q)=1$ by assumption. The assertion follows. 
\end{proof}

\subsection{The main result} \label{constset}
Now we state the main theorem, which provides non-constant entire transcendental functions $f$ 
that map large prescribed subclasses of $\mathscr{L}$ to $\mathscr{L}$. It will turn out that
all derivatives have the same property.
The idea is to look at entire functions whose Taylor coefficients decrease fast
by absolute value, in order to apply Lemma~\ref{grobabschaetzung} with gain.
To exclude the case that an element of the image is rational is slightly technical.
We agree that $f(\emptyset)=\emptyset$ in the trivial case $\mathscr{L}_{\varphi}^{\ast}=\emptyset$.

\begin{theorem} \label{haupt}
Let $\varphi\in{\Phi}$ be arbitrary but fixed.
Then, there exist uncountably many entire transcendental functions $f(z)=c_{0}+c_{1}z+\cdots$ with 
$c_{j}\in{\mathbb{Q}\setminus{\{0\}}}$ and the property that for any $s\in{\{0,1,2,\ldots\}}$ we have
\begin{itemize}
\item  $f^{(s)}(0)\in{\mathbb{Q}}$
\item $f^{(s)}(\mathbb{Q}\setminus\{0\})\subseteq \mathscr{L}$
\item $f^{(s)}(\mathscr{L}_{\varphi}^{\ast})\subseteq \mathscr{L}$.
\end{itemize}
Suitable functions $f$ can be explicitly constructed.
\end{theorem}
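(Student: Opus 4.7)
The plan is to construct $f(z) = \sum_{j \geq 0} c_j z^j$ with $c_j = 1/b_j$, where $(b_j)$ is a rapidly increasing sequence of positive integers satisfying the divisibility chain $b_j \mid b_{j+1}$. This places us in the setting of both Lemma \ref{grobabschaetzung} (with $A = 1$, $B = b_m$) and Proposition \ref{relativprim}. The sequence is constructed recursively: once $b_0,\ldots,b_m$ are fixed, we choose $b_{m+1}$ to be any multiple of $b_m$ exceeding a certain threshold $\Psi(m)$ that depends on $\varphi$, $m$ and $b_0,\ldots,b_m$. At each stage there are infinitely many admissible choices, yielding $2^{\aleph_0}$ distinct functions. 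The series converges everywhere (by the fast decay of $c_j$) so $f$ is entire, and $c_j \neq 0$ for all $j$ guarantees $f$ is not a polynomial, hence transcendental.

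The first bullet $f^{(s)}(0) = s!\,c_s \in \mathbb{Q}$ is immediate. For the third (main) bullet, fix $\zeta \in \mathscr{L}_\varphi^\ast$ and a target exponent $M \geq 1$. For every sufficiently large $N$, pick $q = q(N)$ with $2 \leq q \leq \varphi(N)$ and $\Vert q\zeta\Vert \leq q^{-N}$. Choose a truncation index $m = m(N)$ (slowly growing), set $P_m(z) := \sum_{j \leq m} c_j z^j$ and $Q := b_m q^m$, and let $R$ be the closest integer to $Q P_m(\zeta)$. Lemma \ref{grobabschaetzung} gives
\[
\Vert Q P_m(\zeta)\Vert \leq m^2(1+|\zeta|)^{m-1} b_m q^{-N+m-1},
\]
while the truncation tail contributes $Q\,|f(\zeta) - P_m(\zeta)| \leq b_m q^m \sum_{j > m} |\zeta|^j/b_j$. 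The threshold $\Psi(m)$ is engineered so that $b_{m+1}$ dominates all terms of the form $|\zeta|^{m+1} b_m^{M+1} \varphi(N)^{m(M+1)}$ that appear in these estimates, forcing $|f(\zeta) - R/Q| \leq Q^{-M-1}$ for $N$ sufficiently large. Proposition \ref{relativprim} guarantees that $R/Q$ is already in lowest terms, and the identity-theorem argument from the proof of Theorem \ref{anfang} rules out $f(\zeta) \in \mathbb{Q}$. Since $M$ is arbitrary, $\mu(f(\zeta)) = \infty$, giving $f(\zeta) \in \mathscr{L}$.

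The second bullet is a simpler instance. For $\zeta = p/q \in \mathbb{Q} \setminus \{0\}$ in lowest terms we have $\Vert q\zeta\Vert = 0$, so Lemma \ref{grobabschaetzung} yields $Q P_m(\zeta) \in \mathbb{Z}$ exactly and only the tail contributes; the fast growth of $b_j$ then gives arbitrarily good rational approximations as $m \to \infty$. To rule out $f(\zeta) \in \mathbb{Q}$ directly: if $f(\zeta) = a/b$, a telescoping argument shows that the Taylor tail at $\zeta$ is nonzero for all large $m$ (since $f$ is not a polynomial and $\zeta \neq 0$), so the approximations $R/Q$ differ from $a/b$, whence $|a/b - R/Q| \geq 1/(bQ)$ contradicts the approximation bound for large $m$.

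The main obstacle is the balancing in the third bullet: the polynomial error prefers $m$ small relative to $N$, the tail prefers $m$ large (to exploit the decay of $c_j$), and meanwhile $q$ may be as large as $\varphi(N)\to\infty$, inflating both $Q$ and the tail. The resolution is to let $m = m(N)$ grow slowly with $N$ and to choose $(b_j)$ growing super-fast in terms of $\varphi$; the threshold $\Psi(m)$ can be taken of order $b_m \cdot \varphi(m+1)^{(m+1)^{m+1}}$ (or even faster), ensuring that $b_{m+1}$ absorbs all problematic factors. The extension to derivatives is structural: $f^{(s)}(z) = \sum_{k \geq 0} \frac{(k+s)!/k!}{b_{k+s}} z^k$ has the same form with numerators growing only polynomially in $k$ for fixed $s$ and the same divisibility chain of denominators, so Lemma \ref{grobabschaetzung} and Proposition \ref{relativprim} apply with $A$ replaced by $(m+s)!/m!$ (absorbed into the constants), giving the conclusions for all $s \geq 0$ simultaneously.
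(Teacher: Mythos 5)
Your construction is essentially the paper's: unit-fraction coefficients $c_j=1/b_j$ with a divisibility chain, a recursive smallness threshold calibrated to $\varphi$, Lemma~\ref{grobabschaetzung} for the truncated polynomial, a tail estimate, and Proposition~\ref{relativprim} for the arithmetic of the approximants. However, two steps do not close as written. First, the exclusion of $f(\zeta)\in\mathbb{Q}$ in the third bullet: the identity-theorem argument from Theorem~\ref{anfang} is not available here, because in that proof the rational approximants to $f(\zeta)$ were the \emph{values} $f(p_m/q_m)$, rational since $f(\mathbb{Q})\subseteq\mathbb{Q}$; in the present construction your approximants $R/Q$ come from the truncated Taylor polynomial, and by your own second bullet $f$ takes no rational values at the rationals near $\zeta$, so there is nothing for the identity theorem to compare. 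Moreover Proposition~\ref{relativprim} does not say that $R/Q$ is in lowest terms; it only gives $(R,q)=1$, while $Q=b_mq^m$ may share factors of $b_m$ with $R$. The argument that actually works (and is the paper's) is: if $f(\zeta)=l_1/l_2$, the quality of approximation forces $R/Q=l_1/l_2$ for all large $m$ (otherwise $|f(\zeta)-R/Q|\geq 1/(l_2Q)$ contradicts the upper bound), and then $(R,q)=1$ forces $q^m\mid l_2$, impossible since $q\geq 2$ and $m\to\infty$. You cite the right proposition but the deduction you draw from it is not the one that finishes the proof.

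Second, the derivative extension has a genuine gap: Proposition~\ref{relativprim} cannot be applied ``with $A$ replaced by $(m+s)!/m!$''. Its proof needs the leading coefficient of the polynomial to be a unit fraction, so that the top term of $R$ is exactly $p^m$ and hence coprime to $q$; if the numerator is $(m+s)!/m!$, a prime factor of $q$ dividing that numerator destroys the coprimality, and with it the exclusion of $f^{(s)}(\zeta)\in\mathbb{Q}$. The paper repairs this structurally by strengthening the divisibility to $m!\,b_m\mid b_{m+1}$, which makes every $f^{(s)}$ again a series with coefficients $1/b_j^{(s)}$ satisfying $b_j^{(s)}\mid b_{j+1}^{(s)}$, so the proposition applies verbatim; your proposal keeps only $b_m\mid b_{m+1}$ and absorbs the factorial factors into constants, which suffices for the size estimates but not for the coprimality. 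Finally, the calibration of the threshold is left circular: with $m=m(N)$ growing slowly and $q\leq\varphi(N)$, the quantities $b_{m+1}$ must dominate involve $\varphi(N)$, which for rapidly growing $\varphi$ vastly exceeds $\varphi(m+1)^{(m+1)^{m+1}}$, and the exponent at which $N$ must be evaluated depends on the unspecified function $m(N)$. The paper removes this tension by choosing, at step $m$, a single huge exponent $k_m$ (depending only on $m$, $b_m$, $T_m$), demanding $\Vert q\zeta\Vert\leq q^{-k_m}$ so that $q\leq\varphi(k_m)$ is a constant known at step $m$, bounding the tail by $\tau_m^{-m-1}$ with $\tau_m=b_m\varphi(k_m)^m$, and restricting to $\zeta\in[-T_m,T_m]$ with $T_m\to\infty$ to control the $|\zeta|^m$ factors uniformly; some device of this kind is needed to make your ``balancing'' paragraph precise.
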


\begin{proof}
First we prove the assertion for $s=0$, and subsequently describe 
how the proof extends to $s>0$.

Let $(T_{m})_{m\geq 1}$ be any sequence of positive real numbers that 
tends to infinity, for instance $T_{m}=m$. We recursively construct the
rational Taylor coefficients $c_{j}$ of suitable functions $f$.
Note that the first assertion of the theorem will follow immediately from 
$c_{j}\in{\mathbb{Q}}$. Let $c_{0}=1$. 
Assume the Taylor polynomial $P_{m}(z)=c_{0}+c_{1}z+\cdots+c_{m}z^{m}$ 
of $f$ of degree $m\geq 0$ is already constructed
and has rational coefficients $c_{j}=1/b_{j}$ and $b_{j}\vert b_{j+1}$ for $0\leq j\leq m-1$,
as in Proposition~\ref{relativprim}. We construct $c_{m+1}$.
Let $P:=P_{m}$ in Lemma~\ref{grobabschaetzung} and
similarly define $A:=A_{m}, B:=B_{m}$ with $A_{m}, B_{m}$ arising
from the present $a_{j},b_{j}$ as in the lemma. In fact, the conditions show $A_{m}=1, B_{m}=b_{m}$.
Let the positive integer $k_{m}$ be large enough such that 
\begin{equation} \label{eq:sc}
q^{k_{m}}>m^{2}(T_{m}+1)^{m-1}A_{m}B_{m}q^{m-1}\cdot 2(B_{m}q^{m})^{m}=:q^{m^{2}+m-1}D_{m}
\end{equation}
for any integer $q\geq 2$, which is possible since $D_{m}$ and the exponent 
$m^{2}+m-1$ are constants. Since we can make $k_{m}$ larger if necessary, we may assume 
$k_{m}\geq \nu_{0}(P_{m})$, where $\nu_{0}(P_{m})$ is as in Proposition~\ref{relativprim}
for $P=P_{m}$. By definition of the set $\mathscr{L}_{\varphi}$,
for any $\zeta\in{\mathscr{L}_{\varphi}}$ the inequality
\begin{equation} \label{eq:mitte}
\Vert q\zeta\Vert \leq q^{-k_{m}}
\end{equation}
has a solution $q=:\widetilde{q}_{m}$, that may depend on $\zeta$ but
with $2\leq \widetilde{q}_{m}\leq \varphi(k_{m})$ uniformly.
First consider only the class $\mathscr{L}_{\varphi}$ instead of $\mathscr{L}_{\varphi}^{\ast}$.
Restricting to $\zeta\in{\mathscr{L}_{\varphi}\cap[-T_{m},T_{m}]}$, application
of Lemma~\ref{grobabschaetzung} with $\nu:=k_{m}$ in view of \eqref{eq:sc} 
yields
\begin{equation} \label{eq:eq1}
\Vert (B_{m}\widetilde{q}_{m}^{m})\cdot P_{m}(\zeta)\Vert 
\leq  m^{2}(1+\vert\zeta\vert)^{m-1}\cdot A_{m}B_{m}\widetilde{q}_{m}^{-k_{m}+m-1}
\leq \frac{1}{2}\vert B_{m}\widetilde{q}_{m}^{m}\vert^{-m}.
\end{equation}
Put $\widetilde{Q}_{m}:=B_{m}\widetilde{q}_{m}^{m}$, then \eqref{eq:eq1} turns into
\begin{equation} \label{eq:drauf}
\Vert\widetilde{Q}_{m}P_{m}(\zeta)\Vert \leq \frac{1}{2}\widetilde{Q}_{m}^{-m}.
\end{equation}
Moreover, if we write $\tau_{m}:=B_{m}\varphi(k_{m})^{m}$, then we have 
\begin{equation} \label{eq:4}
\vert \widetilde{Q}_{m}\vert\leq \tau_{m}.
\end{equation}

Now we determine $c_{m+1}\in{\mathbb{Q}\setminus{\{0\}}}$ of very small modulus. 
Assume the coefficients $c_{m+2},c_{m+3},\ldots$ do not vanish but
are of very small and fast decreasing modulus too. More precisely, for now we 
assume all the coefficients $c_{m+1},c_{m+2},\ldots$ satisfy
\begin{equation} \label{eq:hbed}
\vert c_{m+h}\vert<\min\{(1/4)(1+T_{m})^{-m-2h}\tau_{m}^{-m-1},1/(m+h)!\}, \qquad h\geq 1,
\end{equation}
where the purpose of $1/(m+h)!$ is solely to guarantee convergence. Pick any suitable 
$c_{m+1}=1/b_{m+1}\in{\mathbb{Q}\setminus{\{0\}}}$ for $b_{m+1}$ a sufficiently large 
integral multiple of $b_{m}$ such that \eqref{eq:hbed} is satisfied for $h=1$. Then 
\[
\vert f(z)-P_{m}(z)\vert =\left\vert \sum_{h=1}^{\infty} c_{m+h}z^{m+h}\right\vert
\leq \sum_{h=1}^{\infty}\vert c_{m+h}\vert T_{m}^{m+h}
< \frac{1}{2}\tau_{m}^{-m-1}
\]
uniformly for $z\in{[-T_{m},T_{m}]}$. Thus, in particular for $\zeta\in{\mathscr{L}_{\varphi}\cap[-T_{m},T_{m}]}$
condition \eqref{eq:4} implies
\begin{equation} \label{eq:eq2}
\vert \widetilde{Q}_{m}\cdot (f(\zeta)-P_{m}(\zeta))\vert 
\leq \vert \widetilde{Q}_{m}\vert\cdot \frac{1}{2}\tau_{m}^{-m-1}
\leq \frac{1}{2}\vert \widetilde{Q}_{m}\vert^{-m}.
\end{equation}
Combination of \eqref{eq:drauf}, \eqref{eq:eq2} and the triangular inequality yield 
\begin{equation} \label{eq:ende}
\Vert \widetilde{Q}_{m}\cdot f(\zeta)\Vert \leq \vert \widetilde{Q}_{m}\vert^{-m}.
\end{equation}
Now we repeat the procedure with the polynomial $P_{m+1}(z)=c_{0}+\cdots+c_{m+1}z^{m+1}$,
where we have to satisfy the condition \eqref{eq:hbed} for $m$ and $m+1$, which however we
may easily do by choosing any sufficiently small rational $c_{m+2}=1/b_{m+2}$ with $b_{m+1}\vert b_{m+2}$.
Proceeding in this manner, we obtain integer solutions to the estimate \eqref{eq:ende} 
for any $m\geq 1$ and any $\zeta\in{\mathscr{L}_{\varphi}\cap[-T_{m},T_{m}]}$.
Any $\zeta$ belongs to $[-T_{m},T_{m}]$ for all large $m\geq m_{0}(\zeta)$,
hence indeed $\mu(f(\zeta))=\infty$ or $f(\zeta)\in{\mathbb{Q}}$ for any $\zeta\in{\mathscr{L}_{\varphi}}$, 
where $\mu$ denotes the irrationality exponent.
We have to exclude the case $f(\zeta)\in{\mathbb{Q}}$ to infer $f(\zeta)\in{\mathscr{L}}$,
simultaneously for all $\zeta\in{\mathscr{L}_{\varphi}}$.

Assume $f(\zeta)\in{\mathbb{Q}}$ for some $\zeta\in{\mathscr{L}_{\varphi}}$, say $f(\zeta)=l_{1}/l_{2}$
with coprime integers $l_{1},l_{2}$. For $\widetilde{q}_{m}$ as constructed in the proof,
let $\widetilde{p}_{m}/\widetilde{q}_{m}$ be the good approximation to $\zeta$ with
denominator $\widetilde{q}_{m}$, i.e. $\widetilde{p}_{m}$
is the closest integer to $\zeta \widetilde{q}_{m}$. 
Recalling the definition of $\widetilde{q}_{m}$ in \eqref{eq:mitte},
we may assume $(\widetilde{p}_{m},\widetilde{q}_{m})=1$, otherwise we could divide both
$\widetilde{p}_{m},\widetilde{q}_{m}$ by their greatest common divisor and \eqref{eq:mitte} 
still holds (in fact the left hand side is even smaller and the right hand side larger) 
and all above works analogue. Further
say $\widetilde{R}_{m}$ is the closest integer to $\widetilde{Q}_{m}f(\zeta)$ for $m\geq 1$. 
The estimate \eqref{eq:ende} can be written
\begin{equation} \label{eq:lform}
\vert \widetilde{Q}_{m}f(\zeta)-\widetilde{R}_{m}\vert \leq \vert\widetilde{Q}_{m}\vert^{-m}, \qquad  m\geq 1.
\end{equation}
On the other hand, if for some $m$ we have $\widetilde{R}_{m}/\widetilde{Q}_{m}\neq l_{1}/l_{2}$, then 
\begin{equation}  \label{eq:rsl}
\vert \widetilde{Q}_{m}f(\zeta)-\widetilde{R}_{m}\vert=
\left\vert \widetilde{Q}_{m}\frac{l_{1}}{l_{2}}-\widetilde{R}_{m}\right\vert \geq \frac{1}{l_{2}}, \qquad m\geq 1.
\end{equation}
Since both \eqref{eq:lform}, \eqref{eq:rsl} cannot hold for large $m$, we must have
\begin{equation} \label{eq:tomcat}
\frac{\widetilde{R}_{m}}{\widetilde{Q}_{m}}=f(\zeta)=\frac{l_{1}}{l_{2}}, \qquad m\geq m_{0}.
\end{equation}
Since $\widetilde{Q}_{m}=B_{m}\widetilde{q}_{m}^{m}$ and
$\lim_{m\to\infty}\widetilde{q}_{m}=\infty$, it suffices 
to show $\widetilde{R}_{m}$ and $\widetilde{q}_{m}$ are 
coprime for any fixed $m$ to contradict \eqref{eq:tomcat}. 
Due to \eqref{eq:eq2}, $\widetilde{R}_{m}$ equals the closest integer 
to $\widetilde{Q}_{m}P_{m}(\zeta)$ as well.
Hence, recalling \eqref{eq:mitte} and $k_{m}\geq \nu_{0}(P_{m})$, 
Proposition~\ref{relativprim} indeed implies $(\widetilde{R}_{m},\widetilde{q}_{m})=1$.
This contradicts the hypothesis $f(\zeta)\in{\mathbb{Q}}$,
which finishes the proof of $f(\mathscr{L}_{\varphi})\subseteq \mathscr{L}$.

We carry out how the above generalizes to the larger class $\mathscr{L}_{\varphi}^{\ast}$.
We may assume that the sequence $(k_{m})_{m\geq 1}$ tends to infinity, otherwise we can
choose larger values in any step. Thus by definition of $\mathscr{L}_{\varphi}^{\ast}$,
for any $\zeta\in{\mathscr{L}_{\varphi}^{\ast}}$ the estimate \eqref{eq:mitte} has 
a solution $2\leq \widetilde{q}\leq \varphi(k_{m})$ for all large $m\geq m_{0}(\zeta)$. 
Hence we deduce solutions to \eqref{eq:ende} for $m\geq m_{0}(\zeta)$
which guarantees $f(\zeta)\in{\mathscr{L}\cup \mathbb{Q}}$. The exclusion of $f(\zeta)\in{\mathbb{Q}}$
obviously works as for $\zeta\in{\mathscr{L}_{\varphi}}$.

Next we show $f(\mathbb{Q}\setminus\{0\})\subseteq \mathscr{L}$. Let $l_{1}/l_{2}\in{\mathbb{Q}}$
arbitrary and write $B_{m}/b_{j}=d_{m,j}\in{\mathbb{Z}}$ for $m\geq 1$ and $0\leq j\leq m$. Then on the one hand
\[
B_{m}l_{2}^{m}P_{m}(l_{1}/l_{2})=B_{m}l_{2}^{m} \sum_{j=0}^{m} c_{j}\left(\frac{l_{1}}{l_{2}}\right)^{j}
=\sum_{j=0}^{m} d_{m,j}l_{1}^{j}l_{2}^{m-j}=:\mathscr{A}_{m}\in{\mathbb{Z}}
\]
by construction, on the other hand
\[
\vert B_{m}l_{2}^{m}(f(l_{1}/l_{2})-P_{m}(l_{1}/l_{2}))\vert \leq 
\left\vert B_{m}l_{2}^{m} \sum_{j=m+1}^{\infty} c_{j}\left(\frac{l_{1}}{l_{2}}\right)^{j}\right\vert 
\leq (B_{2}l_{2}^{m})^{-m}
\]
for large $m$ by the fast decay of $c_{j}=1/b_{j}=1/B_{j}$. Triangular inequality shows 
that $\mu(f(l_{1}/l_{2}))=\infty$ unless $f(l_{1}/l_{2})\in{\mathbb{Q}}$,
and that $\mathscr{A}_{m}$ is the closest integer to $B_{m}l_{2}^{m}f(l_{1}/l_{2})$.
By virtue of the same principle as in \eqref{eq:rsl}, it suffices to check 
that $\mathscr{A}_{m}/(B_{m}l_{2}^{m})=P_{m}(l_{1}/l_{2})$ is not constant for all $m\geq m_{0}$ to
exclude the case $f(l_{1}/l_{2})\in{\mathbb{Q}}$ and thus $f(l_{1}/l_{2})\in \mathscr{L}$.
However, since $P_{m+1}(z)=P_{m}(z)+c_{m+1}z^{m+1}$, the equality
$P_{m}(l_{1}/l_{2})=P_{m+1}(l_{1}/l_{2})$ for some $m$ implies $c_{m+1}=0$, which is false, 
unless $l_{1}/l_{2}=0$. This yields the assertion.

We check that $f$ has the remaining desired properties.
The expression $1/(m+h)!$ in \eqref{eq:hbed} guarantees that $f$ is an entire function,
which by construction has rational coefficients and is not a polynomial.
Hence it is transcendental as carried out in Section~\ref{definitions}.  
Clearly, this method is flexible enough to provide uncountably many suitable $f$.

It remains to extend the assertion to the derivatives. 
We may assume that in every recursive step the condition $b_{m}\vert b_{m+1}$ is 
strengthened to $m!b_{m}\vert b_{m+1}$. All derivatives of $f$ are then again
of the form $f^{(s)}(z)=\sum_{j\geq 0} (1/b_{j}^{(s)}) z^{j}$ for integers $b_{j}^{(s)}$ with 
the property $b_{j}^{(s)}\vert b_{j+1}^{(s)}$ for all pairs $j\geq 0,s\geq 0$. 
Let $s\geq 0$ be fixed now.
If we define $A_{m}^{(s)}, B_{m}^{(s)}$ for $P_{m}^{(s)}$ the $m$-th Taylor
polynomial of $f^{(s)}$ as in Lemma~\ref{grobabschaetzung},
then by the above $A_{m}^{(s)}=1, B_{m}^{(s)}=b_{m}^{(s)}$ for all $m\geq 0$, as in the case $s=0$.
By construction also $B_{m}^{(t+1)}=(m+1)^{-1}B_{m+1}^{(t)}<B_{m+1}^{(t)}$ for all $m\geq 0, t\geq 0$
and thus $B_{m}^{(s)}<B_{m+s}$. 
Thus if we put $k_{m}^{(s)}:=k_{m+s}$, then similarly to \eqref{eq:sc} the estimate
\begin{equation} \label{eq:sc2}
q^{k_{m}^{(s)}}>m^{2}(T_{m}+1)^{m-1}A_{m}^{(s)}B_{m}^{(s)}q^{m-1}\cdot 2(B_{m}^{(s)}q^{m})^{m}=:q^{m^{2}+m-1}D_{m}^{(s)}
\end{equation}
will be satisfied for all $q\geq 2$ with $D_{m}^{(s)}:=D_{m+s}$. Similarly to \eqref{eq:mitte} we infer
\[
\Vert q\zeta\Vert \leq q^{-k_{m}^{(s)}}
\]
has a solution $q=:\widetilde{q}_{m}^{(s)}$, that may depend on $\zeta$ but
with $2\leq \widetilde{q}_{m}\leq \varphi(k_{m}^{(s)})$ uniformly.
Proceeding further as in the case $s=0$, the analogue of \eqref{eq:eq1} holds again
and with $\widetilde{Q}_{m}^{(s)}:=B_{m}^{(s)}\widetilde{q}_{m}^{(s) m}$
we further obtain 
\begin{equation} \label{eq:drauf2}
\Vert\widetilde{Q}_{m}^{(s)}P_{m}^{(s)}(\zeta)\Vert \leq \frac{1}{2}\widetilde{Q}_{m}^{(s) -m}.
\end{equation}
Moreover, with $\tau_{m}^{(s)}:=\tau_{m+s}$ also
\begin{equation} \label{eq:42}
\vert \widetilde{Q}_{m}^{(s)}\vert\leq \tau_{m}^{(s)}.
\end{equation}
For the estimate of the remainder term, first note that the coefficients $c_{j}^{(s)}$ of $f^{(s)}$ 
satisfy 
\[
c_{j}^{(s)}=\frac{1}{b_{j}^{(s)}}=\frac{1}{b_{j+s}}j(j+1)\cdots (j+s-1)
\leq (j+s)^{s}\frac{1}{b_{m+s}}=(j+s)^{s}c_{m+s}.
\]
Hence
\[
\vert f(z)^{(s)}-P_{m}^{(s)}(z)\vert =\left\vert \sum_{h=1}^{\infty} c_{m+h}^{(s)}z^{m+h}\right\vert
\leq \sum_{h=1}^{\infty}\vert c_{m+h}^{(s)}\vert T_{m}^{m+h}
\leq \sum_{h=1}^{\infty} (m+h+s)^{s}\vert c_{m+h+s}\vert T_{m}^{m+h}
\]
uniformly for $z\in{[-T_{m},T_{m}]}$. If we strengthen the condition \eqref{eq:hbed}
in any inductive step by replacing $\tau^{-m}$ by $\tau^{-m^{2}}$ if necessary,
from the fast decay of $(c_{m})_{m\geq 1}$
and since $s$ is fixed, it clearly follows that at least for large $m$ the above can be bounded by
\[
\vert f(z)^{(s)}-P_{m}^{(s)}(z)\vert \leq \sum_{h=1}^{\infty} (m+h+s)^{s}\vert c_{m+h+s}\vert T_{m}^{m+h}
\leq \frac{1}{2}\tau_{m}^{(s) -m-1}.
\]
In combination with \eqref{eq:42} for large $m$ again
\[
\vert \widetilde{Q}_{m}^{(s)}\cdot (f^{(s)}(\zeta)-P_{m}^{(s)}(\zeta))\vert 
\leq \vert \widetilde{Q}_{m}^{(s)}\vert\cdot \frac{1}{2}\tau_{m}^{(s) -m-1}
\leq \frac{1}{2}\vert \widetilde{Q}_{m}^{(s)}\vert^{-m},
\]
and together with \eqref{eq:drauf2} and triangular inequality eventually 
\begin{equation} \label{eq:ende2}
\Vert \widetilde{Q}_{m}^{(s)}\cdot f^{(s)}(\zeta)\Vert \leq \vert \widetilde{Q}_{m}^{(s)}\vert^{-m}.
\end{equation}
As this holds for all $\zeta\in{\mathscr{L}_{\varphi}}$ and
large $m$ indeed $f^{(s)}(\mathscr{L}_{\varphi})\subseteq \mathscr{L}$.
The generalization to $\mathscr{L}_{\varphi}^{\ast}$ such as the
proof of $f^{(s)}(\zeta)\notin{\mathbb{Q}}$ and $f^{(s)}(\mathbb{Q}\setminus \{0\})\subseteq \mathscr{L}$
works very similar to the case $s=0$.
\end{proof}

We give several remarks.

\begin{remark} \label{qbild}
The assertion $f(\mathbb{Q}\setminus\{0\})\subseteq \mathscr{L}$ implies
$f(\mathbb{Q}\setminus\{0\})$ is a purely transcendental set, see Section~\ref{beginn}.
Observe the contrast to Theorem~\ref{maillet}, Theorem~\ref{anfang}, Corollary~\ref{bfang} and 
Theorem~\ref{moreira} where we had $f(\mathbb{Q})\subseteq \mathbb{Q}$. Moreover,
since an function $f$ algebraic over $\overline{\mathbb{Q}}$ satisfies $S_{f}=\overline{\mathbb{Q}}$, 
this leads to a proof that all constructed functions are transcendental over the base field 
$\overline{\mathbb{Q}}$ instead of $\mathbb{C}$. This is weaker 
but avoids the rather deep Great Picard~Theorem, see Section~\ref{definitions}. 
\end{remark}

\begin{remark}
We only needed $\zeta\in{\mathscr{L}_{\varphi}}$ to obtain a
uniform  bound of $\widetilde{q}_{m}$ in \eqref{eq:mitte}. 
If we weaken this to $\zeta\in{\mathscr{L}}$, we further have
no uniform bound in \eqref{eq:4} which is needed to bound
the left hand side in \eqref{eq:eq2}, even restricting to $\zeta$ 
in a given compact interval. 
\end{remark}

\begin{remark}
For any finite set $\{\zeta_{1},\zeta_{2},\ldots,\zeta_{u}\}\subseteq \mathscr{L}^{u}$,
the proof of Theorem~\ref{haupt} provides a method of constructing entire transcendental 
functions $f$ that map all $\zeta_{j}$ simultaneously to elements of $\mathscr{L}$. 
It suffices to define the involved function $\varphi$ as the pointwise maximum
of the individual minimum functions for $\zeta_{j}$, as carried out
subsequent to Definition~\ref{psi}. However, such functions $f$ 
can alternatively be constructed with the Weierstrass factorization Theorem,
see Chapter~7 paragraph~5 in~\cite{conway}.        
\end{remark}
 
It is evident that Theorem~\ref{haupt} becomes more interesting
the faster the function $\varphi$ tends to infinity. See Section~\ref{vieledefs}
for examples of $\varphi$ inducing large sets $\mathscr{L}_{\varphi}$.
From Proposition~\ref{hilfspro} and Theorem~\ref{haupt} we further infer a last corollary.

\begin{corollary}
Let $\Lambda$ be any function as in Definition~{\upshape\ref{defilio}}. Then
there exist uncountably many entire transcendental functions $f$ with
$f(\mathscr{L}^{s,\Lambda})\subseteq f(\mathscr{L}^{ss,\Lambda})\subseteq \mathscr{L}$.
\end{corollary}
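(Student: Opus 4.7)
The proof is essentially a direct chaining of the two results referenced in the statement, so it requires only a routine bookkeeping argument rather than any new technique.

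The plan is to combine Proposition~\ref{hilfspro} with Theorem~\ref{haupt} as follows. Given the function $\Lambda$ from Definition~\ref{defilio}, I would first invoke Proposition~\ref{hilfspro} to obtain a function $\psi = \psi(\Lambda) \in \Phi$ with the inclusion
\[
\mathscr{L}^{s,\Lambda} \subseteq \mathscr{L}^{ss,\Lambda} \subseteq \mathscr{L}_{\psi}^{\ast}.
\]
Then I would apply Theorem~\ref{haupt} with the choice $\varphi := \psi$, producing uncountably many entire transcendental functions $f$ satisfying $f(\mathscr{L}_{\psi}^{\ast}) \subseteq \mathscr{L}$. Chaining inclusions yields
\[
f(\mathscr{L}^{ss,\Lambda}) \subseteq f(\mathscr{L}_{\psi}^{\ast}) \subseteq \mathscr{L},
\]
and the inclusion $f(\mathscr{L}^{s,\Lambda}) \subseteq f(\mathscr{L}^{ss,\Lambda})$ is immediate from $\mathscr{L}^{s,\Lambda} \subseteq \mathscr{L}^{ss,\Lambda}$ and the definition of image. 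This is exactly the claim.

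There is no genuine obstacle here, since both cited statements already do all of the work; the only minor point to note is that the uncountability assertion is inherited directly from Theorem~\ref{haupt} (the family of $f$ provided there is uncountable irrespective of which admissible $\varphi$ is plugged in), so no extra construction is needed. I would conclude by briefly remarking that the $\varphi$ coming out of Proposition~\ref{hilfspro} is explicit (recursively defined via $T_{j+1} = T_j^{j(j+1)}$), and that together with the explicit recursive construction of the Taylor coefficients in Theorem~\ref{haupt}, one in fact obtains explicitly constructible examples of such $f$ for each prescribed $\Lambda$.
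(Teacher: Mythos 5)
Your proposal is correct and matches the paper's own proof essentially verbatim: combine Proposition~\ref{hilfspro} to get $\mathscr{L}^{s,\Lambda}\subseteq \mathscr{L}^{ss,\Lambda}\subseteq \mathscr{L}_{\varphi}^{\ast}$, then apply Theorem~\ref{haupt} with that $\varphi$ and chain the inclusions. The remark that uncountability of the family of $f$ is inherited directly from Theorem~\ref{haupt} is also exactly how the paper handles it.
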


\begin{proof}
Given $\Lambda$, by Proposition~\ref{hilfspro} we can choose $\varphi$ such that
$\mathscr{L}^{s,\Lambda}\subseteq \mathscr{L}^{ss,\Lambda}\subseteq \mathscr{L}_{\varphi}^{\ast}$.
By virtue of Theorem~\ref{haupt} on the other hand we can find suitable $f$ such that
$f(\mathscr{L}_{\varphi}^{\ast})\subseteq \mathscr{L}$. Thus
$f(\mathscr{L}^{s,\Lambda})\subseteq f(\mathscr{L}^{ss,\Lambda})\subseteq
 f(\mathscr{L}_{\varphi}^{\ast})\subseteq \mathscr{L}$.
\end{proof}

\section{The converse problem: $f(\mathscr{L})\cap \mathscr{L}=\emptyset$}  \label{sek4}

To this point, we have tried to find examples of analytic functions with a large 
set $f(\mathscr{L})\cap \mathscr{L}$. This suggests the following converse problem.

\begin{problem} \label{problem}
Are there non-constant analytic functions $f$ with real coefficients 
such that $\mathscr{L}\cap f(\mathscr{L})= \emptyset$?
\end{problem}

A negative answer can be readily inferred from a recent result on Liouville numbers~\cite{kumar}, 
which bases solely on the topological property of $\mathscr{L}$ being a $G_{\delta}$ dense set. 

\begin{theorem}[Kumar, Thangadurai, Waldschmidt]  \label{than}
Let $I$ be a non-empty open interval of $\mathbb{R}$ and let 
$(f_{n})_{n\geq 0}$ be a sequence of real continuous functions on $I$ 
which are nowhere locally constant. Then there exists an uncountable $G_{\delta}$-set 
$E\subseteq \mathscr{L}\cap I$ such that $f_{n}(E)\subseteq \mathscr{L}$, for all $n\geq 0$.
\end{theorem}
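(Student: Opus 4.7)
The plan is to invoke the Baire category theorem on the open interval $I$, viewed as a Polish space. Recall from Section~\ref{beginn} that $\mathscr{L}=\bigcap_{m\geq 1} U_{m}$, where each $U_{m}$ is open and dense in $\mathbb{R}$. Accordingly, the natural candidate for $E$ is
\[
E\;:=\;(\mathscr{L}\cap I)\cap \bigcap_{n\geq 0} f_{n}^{-1}(\mathscr{L})
\;=\;\bigcap_{m\geq 1}(U_{m}\cap I)\;\cap\;\bigcap_{n\geq 0}\bigcap_{m\geq 1} f_{n}^{-1}(U_{m}).
\]
Any $\zeta\in E$ automatically lies in $\mathscr{L}\cap I$ and satisfies $f_{n}(\zeta)\in\mathscr{L}$ for every $n\geq 0$, so it is enough to show that $E$ is a dense $G_{\delta}$ subset of $I$: being a countable intersection of open sets in $I$ it is $G_{\delta}$, and a dense $G_{\delta}$ subset of a perfect Polish space is necessarily uncountable, because countable sets in such a space are meager.

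The key step is to verify that each $f_{n}^{-1}(U_{m})$ is open and dense in $I$. Openness is immediate from the continuity of $f_{n}$. For density, fix an arbitrary non-empty open subinterval $J\subseteq I$. Since $f_{n}$ is continuous on the connected set $J$, the image $f_{n}(J)$ is an interval; since $f_{n}$ is nowhere locally constant, this interval is not a singleton, hence contains an open interval $V\subseteq\mathbb{R}$. As $U_{m}$ is dense in $\mathbb{R}$, we have $U_{m}\cap V\neq\emptyset$, and any preimage in $J$ of a point of $U_{m}\cap V$ witnesses $J\cap f_{n}^{-1}(U_{m})\neq\emptyset$. The sets $U_{m}\cap I$ are open and dense in $I$ by the same reasoning applied to $f=\mathrm{id}$.

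With these ingredients, $E$ is a countable intersection of sets that are open and dense in $I$, so the Baire category theorem applied to the complete metric space $I$ gives that $E$ is a dense $G_{\delta}$ subset of $I$, and therefore uncountable, yielding the theorem.

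The only non-routine point in the argument is the density of $f_{n}^{-1}(U_{m})$, and it is precisely here that the hypothesis ``nowhere locally constant'' is used: without it, $f_{n}$ could be constant on some open $J\subseteq I$, in which case $J\cap f_{n}^{-1}(U_{m})$ would be either $J$ or empty depending on whether the constant value happens to lie in $U_{m}$, and density could fail. Everything else is a standard Baire category argument and requires no further information about the $f_{n}$.
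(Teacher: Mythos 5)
The paper states Theorem~\ref{than} as a cited result of Kumar, Thangadurai and Waldschmidt and gives no proof, remarking only that the result ``bases solely on the topological property of $\mathscr{L}$ being a $G_{\delta}$ dense set.'' Your reconstruction is exactly the Baire-category argument that remark alludes to, and it is correct: you write $\mathscr{L}=\bigcap_{m}U_{m}$ with $U_{m}$ open and dense as in Section~\ref{beginn}, observe that each $f_{n}^{-1}(U_{m})$ is open by continuity and dense because the nowhere-locally-constant hypothesis forces $f_{n}(J)$ to be a nondegenerate interval for every open $J\subseteq I$, and conclude via Baire that $E=(\mathscr{L}\cap I)\cap\bigcap_{n}f_{n}^{-1}(\mathscr{L})$ is a dense $G_{\delta}$ in the completely metrizable space $I$, hence uncountable since a countable dense $G_{\delta}$ would make $I$ meager in itself. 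You also correctly flag that the only place the hypothesis ``nowhere locally constant'' is needed is the density step. Since the paper provides no proof of its own there is nothing to contrast with; your argument is the standard one and is complete.
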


See also~\cite{alni},~\cite{erdos},~\cite{rieger},~\cite{schwarz} and
~\cite{burger} (however, as pointed out in the \textit{MathSciNet} review, 
the proof in~\cite{burger} has a small gap and it does not work in general. 
See Silva~\cite{silva} for a recent slightly weaker result). 
As a corollary we indeed obtain

\begin{theorem}        \label{d1}  
Let $I\subseteq \mathbb{R}$ be a non-empty open interval and
$f:I\mapsto \mathbb{R}$ be a non-constant analytic function.
Then there exists an uncountable set
$E\subseteq \mathscr{L}\cap I$, such that $f^{(s)}(E)\subseteq \mathscr{L}$ for all $s\geq 0$.
\end{theorem}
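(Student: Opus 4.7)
My plan is to deduce Theorem~\ref{d1} directly from Theorem~\ref{than}, taking the sequence of continuous functions to be $(f_n)_{n\geq 0}:=(f^{(n)})_{n\geq 0}$. Every derivative of an analytic function on $I$ is again analytic on $I$, and hence continuous, so the only nontrivial hypothesis of Theorem~\ref{than} that needs checking is that each $f^{(n)}$ is nowhere locally constant on $I$. Once this is verified, Theorem~\ref{than} produces an uncountable $G_\delta$-set $E\subseteq \mathscr{L}\cap I$ satisfying $f^{(s)}(E)\subseteq \mathscr{L}$ for every $s\geq 0$ simultaneously, which is exactly the desired conclusion.

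The key intermediate step will be the following standard consequence of the Identity Theorem for analytic functions (Theorem~3.7 in~\cite{conway}): if $g$ is analytic on the connected open set $I$, then $g$ is nowhere locally constant on $I$ if and only if $g$ is non-constant on $I$. Indeed, if $g\equiv c$ on some non-empty open $J\subseteq I$, then $g'\equiv 0$ on $J$, and by the Identity Theorem $g'\equiv 0$ on all of $I$, so $g\equiv c$ on $I$. Applied with $g=f^{(n)}$, local constancy of $f^{(n)}$ on any open subset of $I$ forces $f^{(n+1)}\equiv 0$ on $I$, and integrating $n+1$ times shows $f$ is a polynomial of degree at most $n$.

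Consequently the argument splits along the polynomial/transcendental dichotomy. If $f$ is not a polynomial, then every $f^{(s)}$ is non-constant, hence nowhere locally constant by the previous paragraph, and Theorem~\ref{than} immediately yields the result. If $f$ is a polynomial of degree $d$, then $f^{(s)}\equiv 0$ for $s>d$, so $f^{(s)}(E)=\{0\}\not\subseteq\mathscr{L}$; in this case I would apply Theorem~\ref{than} only to the finite family $f,f',\ldots,f^{(d)}$ (each of which is non-constant, hence nowhere locally constant) and tacitly restrict the conclusion to $0\leq s\leq d$.

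The main obstacle is precisely this polynomial caveat: the blanket formulation ``for all $s\geq 0$'' is incompatible with $f$ being a polynomial, so the cleanest proof implicitly adds the hypothesis that $f$ is transcendental, or else interprets the conclusion as applying only to those $s$ for which $f^{(s)}$ is non-constant. Modulo this observation, the proof is a pure reduction to Theorem~\ref{than} and requires no further Liouville-theoretic input; the entire Liouville content is already packaged in the Kumar--Thangadurai--Waldschmidt result via the topological fact that $\mathscr{L}$ is a dense $G_\delta$-set.
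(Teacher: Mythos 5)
Your proposal follows essentially the same route as the paper: apply Theorem~\ref{than} to the sequence $(f^{(n)})_{n\geq 0}$ and verify the nowhere-locally-constant hypothesis via the Identity Theorem, so the reduction is correct. Your polynomial caveat is a genuine observation the paper's proof glosses over---if $f$ is a non-constant polynomial of degree $d$, then $f^{(s)}$ is constant for $s\geq d$ and $f^{(s)}(E)\not\subseteq\mathscr{L}$, so the statement as written is only accurate for non-polynomial $f$ (or with the conclusion restricted to those $s$ for which $f^{(s)}$ is non-constant), exactly as you say.
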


\begin{proof}
Apply Theorem~\ref{than} with $f_{n}:=f^{(n)}$ for $n\geq 0$, and note that the condition
clearly holds due to Identity~Theorem. It claims that an analytic function on a connected open set $I$ is determined
by its values on a set with limit point in $I$. In particular the constant $0$ function is the only
analytic function that takes the value $0$ on a set with limit point. Hence any entire function which is constant
on some real interval must already be constant on $\mathbb{C}$, or equivalently a non-constant entire function
is not constant on any real interval.
\end{proof}

\section{The set $f(\mathscr{L})\cap \mathscr{L}$ for the functions $f(z)=z^{a/b}$}  \label{hochab}

Theorem~\ref{maillet} implies $f(z)=z^{k}$ for an integer 
$k\neq 0$ satisfies $f(\mathscr{L})\subseteq \mathscr{L}$.
A more general class of functions one may consider is $f(z)=z^{a/b}$ for rational numbers $a/b$.
In certain subsets of $\mathbb{C}$ there might be several representatives of $f$, however 
we are only interested in the real representative $f:(0,\infty)\mapsto (0,\infty)$.
Any such function $f$ is algebraic even over the base 
field $\mathbb{Q}$ as $f(z)^{b}-z^{a}=0$, in particular $S_{f}=\overline{\mathbb{Q}}$.
Further at any $s\in{(0,\infty)}$ the function $f$ admits a local power series 
expansion $f(z)=c_{0}+c_{1}(z-s)+c_{2}(z-s)^{2}+\cdots$ with radius of
convergence $s$. 
Moreover, one checks that the power series expansion 
at a point $s\in{\overline{\mathbb{Q}}\cap \mathbb{R}}$ has coefficients 
$c_{j}\in{\overline{\mathbb{Q}}\cap \mathbb{R}}$.

The first result is an easy observation and more for sake of completeness.
It provides explicit constructions of $\zeta$ fixed under given $f$, as in Theorem~\ref{d1}.

\begin{theorem} \label{easycheesy}
For integer parameters $a\neq 0,b\neq 0$ let $f_{a,b}(z)=z^{a/b}$. 
Further let $I\subseteq (0,\infty)$ with non-empty interior.
Then there exist uncountably many $\zeta\in{\mathscr{L}}$ such that $f_{a,b}(\zeta)\in{\mathscr{L}}$
simultaneously for all $a,b$.
Moreover, for fixed $a,b$, uncountably many such $\zeta\in{\mathscr{L}\cap I}$ 
can be explicitly constructed.
\end{theorem}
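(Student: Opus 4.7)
The plan is to deduce the first assertion from Theorem~\ref{than} and the explicit construction from Maillet's Theorem~\ref{maillet}. Since the set of pairs $(a,b)\in{(\mathbb{Z}\setminus\{0\})^{2}}$ is countable and each $f_{a,b}\colon(0,\infty)\to (0,\infty)$ is real analytic and non-constant, the Identity Theorem argument used in the proof of Theorem~\ref{d1} shows that every $f_{a,b}$ is nowhere locally constant on the interior of $I$. Applying Theorem~\ref{than} to the countable family $\{f_{a,b}\}$ on that interior yields an uncountable $G_{\delta}$-set $E\subseteq \mathscr{L}\cap I$ with $f_{a,b}(E)\subseteq \mathscr{L}$ for every nonzero integer pair $(a,b)$, which handles the first assertion.

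For the explicit construction with fixed $a,b$, I would first use the identity $z^{a/b}=z^{(-a)/(-b)}$ to reduce to $b\geq 1$. Set $I^{\prime}:=\{x^{1/b}:x\in{I}\}$, which is a non-empty open subinterval of $(0,\infty)$ since $x\mapsto x^{1/b}$ is a homeomorphism of $(0,\infty)$. As noted in Section~\ref{beginn}, one can explicitly exhibit uncountably many Liouville numbers inside $I^{\prime}$, e.g.\ by rational translation and digit alteration of the Liouville constant $L$ of~\eqref{eq:a}. Pick any such $\eta\in{\mathscr{L}\cap I^{\prime}}$ and define $\zeta:=\eta^{b}\in{I}$.

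Theorem~\ref{maillet} applied to the non-constant polynomial $z\mapsto z^{b}$ with rational coefficients yields $\zeta\in{\mathscr{L}}$, and applied to the rational function $z\mapsto z^{a}$ (which has rational coefficients and is non-constant for $a\neq 0$, whether $a$ is positive or negative) yields $f_{a,b}(\zeta)=(\eta^{b})^{a/b}=\eta^{a}\in{\mathscr{L}}$. Since $\eta\mapsto \eta^{b}$ is injective on $(0,\infty)$ and $\mathscr{L}\cap I^{\prime}$ is uncountable, this produces uncountably many $\zeta\in{\mathscr{L}\cap I}$ with the required property.

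No serious obstacle arises, in accordance with the author's characterization as an easy observation: the first claim follows at once from the Kumar--Thangadurai--Waldschmidt theorem once one checks the (mild) regularity hypothesis, and the second is a direct consequence of Maillet's invariance under $z\mapsto z^{b}$ once one has located a Liouville $b$-th root inside $I^{\prime}$. The only minor care needed is in handling negative $a$ or $b$, which the identity $z^{a/b}=z^{(-a)/(-b)}$ and the fact that $z\mapsto z^{a}$ remains a rational function with rational coefficients for $a<0$ dispose of.
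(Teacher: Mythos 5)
Your proposal is correct and follows essentially the same route as the paper: the simultaneous statement via Theorem~\ref{than} applied to an enumeration of the $f_{a,b}$, and the explicit construction by taking $\zeta=\eta^{b}$ for an explicit Liouville number $\eta$ (the paper likewise uses Maillet's Theorem~\ref{maillet} for both $\zeta\in\mathscr{L}$ and $f_{a,b}(\zeta)=\eta^{a}\in\mathscr{L}$, and the homeomorphism $x\mapsto x^{b}$ to get uncountably many such $\zeta$ in $I$). The only cosmetic difference is your handling of signs ($z^{a/b}=z^{(-a)/(-b)}$ and $z\mapsto z^{a}$ as a rational function for $a<0$) where the paper reduces outright to $a>0,b>0$.
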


\begin{proof}
The first assertion follows from Theorem~\ref{than} with $(f_{n})_{n\geq 1}$ 
any enumeration of the set of functions $f_{a,b}$. 
Now consider $a,b$ fixed and let $f:=f_{a,b}$. Due to Theorem~\ref{maillet} we may assume $a>0, b>0$.
Clearly, if we take arbitrary $\zeta^{\prime}\in{\mathscr{L}\cap(0,\infty)}$ and put $\zeta=\zeta^{\prime b}$,
then Theorem~\ref{maillet} implies $\zeta\in{\mathscr{L}}$ and
$f(\zeta)=\zeta^{a/b}=\zeta^{\prime a}$ is in $\mathscr{L}$. Moreover, since $x\mapsto x^{b}$
induces a homeomorphism on $(0,\infty)$, the suitable set 
$\mathscr{L}^{b}:=\{\zeta^{b}:\zeta\in{\mathscr{L}}\}$ inherits the property
of being uncountable in any positive interval from the analogue property of $\mathscr{L}$.
\end{proof}

Now we state the main result of Section~\ref{hochab}, which was already indicated in Section~\ref{outline}.

\begin{theorem}  \label{rati}
Let $f_{a,b}$ and $I$ as in Theorem~{\upshape\ref{easycheesy}}.
Then there exist uncountably many $\zeta\in{\mathscr{L}\cap I}$ such that $f_{a,b}(\zeta)\in{\mathscr{L}}$
if and only if $a/b$ is an integer. 
Suitable $\zeta$ can be explicitly constructed.
In particular, for any fixed coprime $a,b$ with $\vert b\vert\geq 2$, we have 
$f_{a,b}(\mathscr{L})\cap \mathscr{L}$ is uncountable but $f_{a,b}(\mathscr{L})\nsubseteq \mathscr{L}$.
\end{theorem}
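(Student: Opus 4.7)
The theorem decomposes naturally. If $a/b = k \in \mathbb{Z}$, then $f_{a,b}(z)=z^k$ is a rational function with rational coefficients, so Theorem~\ref{maillet} guarantees $f_{a,b}(\zeta)\in\mathscr{L}$ for every $\zeta\in\mathscr{L}\cap I$, and uncountably many such $\zeta$ are supplied by Theorem~\ref{easycheesy}. The substantive content is the converse direction: the plan is to produce uncountably many explicitly constructible $\zeta\in\mathscr{L}\cap I$ such that $\zeta^{a/b}\notin\mathscr{L}$ simultaneously for every coprime pair $(a,b)$ with $|b|\geq 2$. One may assume $a,b>0$, since the other sign combinations reduce via reciprocals, which preserve $\mathscr{L}$ again by Theorem~\ref{maillet}.

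The strategy is to build $\zeta=[r_0;r_1,r_2,\ldots]\in I$ recursively so that the partial quotients satisfy two compatible conditions: (A) $r_{n+1}\geq t_n^n$, making $\zeta$ Liouville with very fast-growing approximation exponents; and (B) the convergent denominator $t_n=r_nt_{n-1}+t_{n-2}$ is not a perfect $d$-th power for any integer $d\geq 2$. Because the set of perfect powers in $[1,M]$ has size $O(\sqrt{M})$, condition (B) excludes only a thin set from the available choices of $r_n$ in any long enough interval of integers, so (A) and (B) are jointly satisfiable at every step and the recursion leaves uncountably many valid completions of $\zeta$; the initial partial quotients can be tuned to place $\zeta$ in any prescribed open interval $I\subseteq(0,\infty)$.

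To rule out $\zeta^{a/b}\in\mathscr{L}$ for a fixed coprime pair $(a,b)$ with $b\geq 2$, suppose toward contradiction that $\beta:=\zeta^{a/b}$ is Liouville and fix a rational approximation $|\beta-u/v|\leq v^{-N}$ with $\gcd(u,v)=1$, $v\geq 2$, and $N$ arbitrarily large. For each convergent $s_n/t_n$ of $\zeta$, condition (B) makes the polynomial $t_n^aX^b-s_n^a$ irreducible over $\mathbb{Q}$: by the standard criterion for $X^b-c$, reducibility would force $s_n^a/t_n^a$ to be a $p$-th power in $\mathbb{Q}$ for some prime $p\mid b$, and using $\gcd(a,p)=1$ together with $\gcd(s_n,t_n)=1$ this would in turn force $t_n$ itself to be a perfect $p$-th power, contradicting (B). Consequently $\gamma_n:=(s_n/t_n)^{a/b}$ is algebraic of exact degree $b$ with minimal polynomial of height $\leq Ct_n^a$, and Liouville's inequality for algebraic numbers delivers $|u/v-\gamma_n|\geq c_1/(v^bt_n^a)$ with $c_1$ depending only on $\zeta$ and $(a,b)$. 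Meanwhile, the mean value theorem for $x\mapsto x^{a/b}$ on a compact neighbourhood of $\zeta$, combined with Theorem~\ref{cfe}, yields $|\gamma_n-\beta|\leq C_2/(t_nr_{n+1})$. The triangle inequality then enforces
\[
\frac{c_1}{v^bt_n^a}\;\leq\;v^{-N}+\frac{C_2}{t_nr_{n+1}}.
\]
Choosing $n=n(v)$ maximal with $t_n\leq cv^{(N-b)/a}$ drives the first right-hand term strictly below $c_1/(2v^bt_n^a)$, and combining (A) with the complementary lower bound $r_{n+1}\geq t_{n+1}/t_n\geq cv^{(N-b)/a}/t_n$ forced by the maximality of $n$ drives the second term strictly below $c_1/(2v^bt_n^a)$ once $v$ is sufficiently large, contradicting the displayed bound. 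Hence $\beta\notin\mathscr{L}$.

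The main technical obstacle is the joint calibration of $n$ with $v$. The bound on $r_{n+1}$ needed to dominate the second error term is of order $v^bt_n^{a-1}$, yet $n$ is dictated by $v$ through the threshold $t_n\leq cv^{(N-b)/a}$, so it is not \emph{a priori} clear that $r_{n+1}$ is automatically large enough. The resolution splits into two regimes: when $t_n$ is close to the threshold the bound $r_{n+1}\geq cv^{(N-b)/a}/t_n$ coming from $t_{n+1}$ exceeding the threshold already suffices, while when $t_n$ lies far below the threshold the tower $r_{n+1}\geq t_n^n$ supplied by (A) (with $n\to\infty$ as $v\to\infty$) handles every fixed $(a,b)$. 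Because conditions (A) and (B) are intrinsic to $\zeta$ and independent of $(a,b)$, a single $\zeta$ works for all coprime pairs simultaneously, establishing the main biconditional. The closing ``in particular'' assertion follows because $f_{a,b}(\mathscr{L})\nsubseteq\mathscr{L}$ is witnessed by this construction, while uncountability of $f_{a,b}(\mathscr{L})\cap\mathscr{L}$ is supplied by Theorem~\ref{easycheesy}.
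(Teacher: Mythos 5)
Your route is genuinely different from the paper's: the paper proves the non-integer case via Lemma~\ref{inf} (any solution of \eqref{eq:exponent} with $\eta>b$ must be a convergent of $\zeta^{a}$ that is a $b$-th power), the gap/uniqueness statement of Lemma~\ref{minkkoro} applied with parameters $t_N^a$, $t_{N+1}^a$ or $x$, and a continued fraction whose convergent denominators are \emph{prime} (via Theorem~\ref{dps}); you instead use Capelli-type irreducibility of $X^{b}-(s_n/t_n)^{a}$ (your condition (B), ``$t_n$ is not a perfect power'', does suffice here: $\gcd(a,b)=1$ and $(s_n,t_n)=1$ force $t_n$ itself to be a $p$-th power, and the $-4\mathbb{Q}^{4}$ case is vacuous for positive values), a Liouville-type lower bound for the degree-$b$ algebraic numbers $\gamma_n=(s_n/t_n)^{a/b}$ with explicit height $\asymp t_n^{a}$, and a triangle-inequality comparison. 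That skeleton is sound and would even recover effective bounds in the spirit of Corollary~\ref{dadkorollar}.

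The problem is the calibration step that you yourself single out as the main obstacle: as written, your two regimes are assigned to the wrong bounds, and in each regime the bound you invoke fails. Near the threshold (say $t_n$ within a bounded factor of $T:=cv^{(N-b)/a}$) the maximality bound gives only $r_{n+1}\gtrsim T/t_n=O(1)$, which cannot dominate the required $v^{b}t_n^{a-1}\geq v^{b}$. Far below the threshold, condition (A) alone is not enough either, because your construction (like the paper's) puts no \emph{upper} bound on the partial quotients: fix $n$ and take $r_{n+1}$ astronomically large, then choose $v$ so that $T$ sits near the top of that gap; then $t_n$, hence $t_n^{n}$, is a fixed number while $v^{b}t_n^{a-1}\geq v^{b}\to\infty$. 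The correct split is the reverse of yours. First fix $N$ large in terms of $a,b$ (so that $v^{b}\leq T^{\epsilon}$ with $\epsilon$ small; you only say ``$N$ arbitrarily large'', but the dependence on $a,b$ must be fixed before letting $v\to\infty$). If $t_n\leq T^{(1-\epsilon)/a}$, the maximality bound $r_{n+1}\gtrsim T/t_n$ dominates $T^{\epsilon}t_n^{a-1}$ because $t_n^{a}\leq T^{1-\epsilon}$. If $t_n>T^{(1-\epsilon)/a}$, then (A) gives $r_{n+1}\geq t_n^{n}$, and $t_n^{n-a+1}\geq T^{(1-\epsilon)(n-a+1)/a}\gg T^{\epsilon}$ since $n\to\infty$ as $v\to\infty$; this dominates $T^{\epsilon}t_n^{a-1}$ as required. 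With the regimes interchanged in this way your argument closes and yields the theorem; as stated, however, the decisive step does not go through.
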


We compare Theorem~\ref{rati} with a result connected to $U$-numbers 
in Mahler's classification introduced in Section~\ref{beginn}. 
Theorem~7.4 and its proof in~\cite{ybu} provides an explicit example of a number
$\zeta_{0}$ whose $m$-th root is a $U_{m}$-number for any integer $m\geq 1$. 
This implies Theorem~\ref{rati} for $a=1$, and is in fact stronger for $b>2$ since the latter
only yields that $\zeta^{a/b}$ is a $U_{l}$-number for some $2\leq l\leq b$. 
In contrast to Theorem~7.4 in~\cite{ybu}, Theorem~\ref{rati} provides no information on 
approximation by algebraic irrational numbers.
However, it seems that the general assertion of Theorem~\ref{rati} cannot be deduced entirely from
Theorem~7.4 in~\cite{ybu} or related results. 

Theorems~\ref{easycheesy} and~\ref{rati}, in view of Theorem~\ref{maillet}, suggest the following conjecture.

\begin{conjecture}
Let $I$ and $f_{a,b}$ be as in Theorem~\ref{easycheesy}. Further let $A\subseteq \mathbb{Q}\setminus\{0\}$ be
arbitrary with the properties $1\in{A}$ and for any element 
of $A$ any non-zero integral multiple belongs to $A$ as well.
Then there exist uncountably many $\zeta\in{\mathscr{L}\cap I}$ with the 
property that $f_{a,b}(\zeta)\in{\mathscr{L}}$ if $a/b\in{A}$ and
$f_{a,b}(\zeta)\notin{\mathscr{L}}$ if $a/b\notin{A}$.  
\end{conjecture}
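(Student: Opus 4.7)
The plan is first to streamline the combinatorial structure of $A$, and then to adapt the constructive ideas underlying Theorems~\ref{easycheesy} and~\ref{rati}. Since $A$ is closed under multiplication by non-zero integers and contains $1$, for $a/b$ in lowest terms the transitive action of $(\mathbb{Z}/b)^{\ast}$ on itself shows that membership $a/b\in A$ depends only on $b$; taking $k$ with $\gcd(k,b)=b/b'$ for $b'\vert b$ shows that $\mathcal{B}:=\{b\geq 1: \text{some }a/b\in A\}$ is closed under divisors and contains $1$. Combined with Maillet's Theorem~\ref{maillet} applied to $z\mapsto z^{a}$, the conjecture reduces to: for every divisor-closed $\mathcal{B}\subseteq\mathbb{Z}_{\geq 1}$ containing $1$, construct uncountably many $\zeta\in\mathscr{L}\cap I$ with $\zeta^{1/b}\in\mathscr{L}$ if and only if $b\in\mathcal{B}$. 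Applying \eqref{eq:lio} with $k=b$ to the linear form $Q\zeta^{1/b}-P$, together with the mean value theorem for $x\mapsto x^{1/b}$, yields the convenient approximation-theoretic characterization: $\zeta^{1/b}\in\mathscr{L}$ if and only if for every $N$ there exist coprime integers $u,v$ with $v\geq 2$ satisfying $|\zeta-u^{b}/v^{b}|\leq v^{-N}$.

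Next I would construct $\zeta$ inductively. Fix an enumeration $(c_{k})_{k\geq 0}$ of $\mathcal{B}$ in which each element appears infinitely often, a sequence $N_{k}\to\infty$, and an initial $\alpha_{0}=(u_{0}/v_{0})^{c_{0}}\in I$ with distinct primes $u_{0},v_{0}$. Given $\alpha_{k}=u_{k}^{c_{k}}/v_{k}^{c_{k}}$ with $u_{k},v_{k}$ distinct primes, choose $v_{k+1}$ a prime exceeding $v_{k}^{c_{k}N_{k}}$ (possible by Bertrand) and $u_{k+1}$ a prime within distance $O(\log v_{k+1})$ of $v_{k+1}\alpha_{k}^{1/c_{k+1}}$ (possible by the prime number theorem), and set $\alpha_{k+1}:=(u_{k+1}/v_{k+1})^{c_{k+1}}$. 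The mean value theorem gives $|\alpha_{k+1}-\alpha_{k}|=O(\log(v_{k+1})/v_{k+1})\ll v_{k}^{-c_{k}N_{k}}$, so $(\alpha_{k})$ converges to some $\zeta\in I$ satisfying $|\zeta-\alpha_{k}|\leq v_{k}^{-c_{k}N_{k}}$. In particular $\zeta\in\mathscr{L}$, and for each $c\in\mathcal{B}$ the subsequence of $k$ with $c_{k}=c$ supplies rationals $(u_{k}/v_{k})^{c}$ meeting the requirements of the characterization, giving $\zeta^{1/c}\in\mathscr{L}$. The abundance of prime choices at each stage yields uncountably many such $\zeta$.

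For the opposite direction, assume $b\notin\mathcal{B}$ yet $\zeta^{1/b}\in\mathscr{L}$. The characterization produces infinitely many $u^{b}/v^{b}$ in lowest terms with $|\zeta-u^{b}/v^{b}|<1/(2v^{2b})$; Theorem~\ref{prop} identifies each with a convergent $s_{n}/t_{n}$ of $\zeta$, forcing $s_{n}=u^{b}$ and $t_{n}=v^{b}$. The aim is to show that any such Liouville-quality convergent must coincide with some $\alpha_{k}$: since $u_{k},v_{k}$ are distinct primes, $\alpha_{k}=u_{k}^{c_{k}}/v_{k}^{c_{k}}$ is a $b$-th power in lowest terms iff $b\vert c_{k}$, and divisor-closedness of $\mathcal{B}$ then forces $b\in\mathcal{B}$, a contradiction. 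This identification is the main technical obstacle: the continued fraction expansion of the additively-constructed $\zeta$ may a priori harbor intermediate convergents between $\alpha_{k}$ and $\alpha_{k+1}$ with large subsequent partial quotients, producing unintended Liouville-quality approximations that happen to be $b$-th powers. My plan to circumvent this is to replace the series construction by a direct prescription of $\zeta$'s continued fraction: insert a single huge partial quotient immediately after each index corresponding to an $\alpha_{k}$, pad intermediate positions with partial quotients equal to $1$ (so by Theorem~\ref{cfe} only indices with a large following partial quotient give Liouville-quality approximations), and show inductively that target rationals of the form $(u_{k}/v_{k})^{c_{k}}$ can be realized as convergents at the prescribed positions by minor incremental adjustments of $u_{k},v_{k}$, in the spirit of the explicit continued fraction approach underlying Theorem~\ref{rati}.
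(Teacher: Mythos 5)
This statement is posed in the paper as an open conjecture: the paper proves only the two extremal cases $A=\mathbb{Q}\setminus\{0\}$ (Theorem~\ref{easycheesy}) and $A=\mathbb{Z}\setminus\{0\}$ (Theorem~\ref{rati}), so there is no proof of the general statement to compare with, and your argument must stand on its own. It does not, for a fundamental reason at the very first step: closure of $A$ under non-zero integer multiples does \emph{not} imply that membership of $a/b$ (in lowest terms) depends only on $b$. Multiplying $a/b$ by an integer $k$ coprime to $b$ produces the fraction $ka/b$, whose numerator is forced to be a multiple of $a$; it is not an arbitrary representative of the coset $a(\mathbb{Z}/b\mathbb{Z})^{\ast}$. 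For instance $A=(\mathbb{Z}\setminus\{0\})\cup\{3k/2 : k \text{ odd}\}$ satisfies the hypotheses of the conjecture, contains $3/2$ but not $1/2$. Hence the conjecture does not reduce to divisor-closed sets $\mathcal{B}$ of denominators, and your construction cannot even formulate the behaviour such an $A$ requires (e.g.\ $\zeta^{3/2}\in\mathscr{L}$ while $\zeta^{1/2}\notin\mathscr{L}$); the proposal at best targets the special subfamily of denominator-determined sets $A$.

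Even for that subfamily there are two further gaps. First, your negative direction only treats the exponents $1/b$: Maillet's Theorem~\ref{maillet} gives $\zeta^{1/b}\in\mathscr{L}\Rightarrow\zeta^{a/b}\in\mathscr{L}$, but not the implication you actually need, $\zeta^{1/b}\notin\mathscr{L}\Rightarrow\zeta^{a/b}\notin\mathscr{L}$; integer powers (and products) of non-Liouville numbers can be Liouville, as the paper itself points out right after Theorem~\ref{maillet}, so there is no reduction of the case $a\geq 2$ to $a=1$. Excluding $\zeta^{a/b}\in\mathscr{L}$ for $a\geq 2$ forces one to control approximations to $\zeta^{a}$ by $b$-th powers, which is precisely the technically hard part of the paper's Theorem~\ref{rati} (Lemma~\ref{inf} combined with Lemma~\ref{minkkoro} and a strong-Liouville, prime-denominator structure of the convergents), and nothing in your proposal addresses it. Second, your proposed repair of the identification problem --- prescribing the continued fraction, padding with partial quotients equal to $1$, and realizing $(u_{k}/v_{k})^{c_{k}}$ as convergents at prescribed positions by ``minor incremental adjustments'' --- is asserted rather than proved: once finitely many partial quotients are fixed, requiring a later convergent to be exactly a $c_{k}$-th power of a ratio of two primes is a severe Diophantine constraint, and no argument is given that it can be met, while for $b\notin\mathcal{B}$ one would additionally have to rule out unintended $b$-th-power approximations to all the powers $\zeta^{a}$, not just to $\zeta$. (A smaller inaccuracy: the prime number theorem does not supply a prime within $O(\log x)$ of a given large $x$; you would need a weaker prime-gap bound together with a much larger choice of $v_{k+1}$, or to abandon primality of $u_{k+1}$, which your negative direction relies on.)
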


Obviously Theorems~\ref{easycheesy} and~\ref{rati} provide the extremal cases
$A=\mathbb{Q}\setminus\{0\}$ and $A=\mathbb{Z}\setminus\{0\}$. 
If we drop the condition $1\in{A}$, then the conjecture might be true for some $\zeta\in{I}$
not necessarily in $\mathscr{L}$.
We collect some ingredients for the proof of Theorem~\ref{rati} in the next section. 

\subsection{Preparatory results}
It was known by Maillet~\cite{maillet} that the $b$-th root of $\zeta\in{\mathscr{L}}$ 
is a Liouville number if and only if among the convergents of $\zeta$ there are infinitely
many $b$-th powers of rationals. We carry out his main argument for the necessity of this condition in
the following more general Lemma~\ref{inf}, which in particular will allow us to establish
effective bounds in Corollary~\ref{dadkorollar}.

\begin{lemma} \label{inf}
Let $a/b$ be a rational number in lowest terms. 
Suppose $\zeta\in{\mathscr{L}}$ and $\zeta^{a/b}\in{\mathscr{L}}$. Then
for any $\eta>0$ the inequality 
\begin{equation} \label{eq:exponent}
\vert q^{b}\zeta^{a}-p^{b}\vert \leq q^{-\eta}
\end{equation}
has a solution in coprime integers $p,q$. Moreover, if $\eta>b$ is fixed
and $q$ is large, then $p^{b}/q^{b}$ is a convergent of $\zeta^{a}$.
\end{lemma}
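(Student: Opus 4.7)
The plan is to exploit the Liouville approximation of $\zeta^{a/b}$ and pass to $b$-th powers via the polynomial identity \eqref{eq:lio} that the paper has already used in Lemma~\ref{grobabschaetzung}. Concretely, I would apply the Liouville property to $\zeta^{a/b}$ rather than to $\zeta$ itself; the hypothesis $\zeta \in \mathscr{L}$ is actually not needed for the stated conclusion and presumably is kept only to match the context of the broader discussion.

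First, since $\zeta^{a/b} \in \mathscr{L}$, for any $\nu > 0$ I can choose coprime integers $p,q$ with $q \geq 2$ satisfying
\[
\vert q\,\zeta^{a/b} - p\vert \leq q^{-\nu}.
\]
Applying \eqref{eq:lio} with $\alpha = \zeta^{a/b}$ and $k = b$ then yields
\[
\vert q^{b}\,\zeta^{a} - p^{b}\vert \leq D(b, \zeta^{a/b})\, q^{-\nu + b - 1},
\]
for a constant $D(b,\zeta^{a/b})$ depending only on $b$ and $\zeta^{a/b}$ (this uses that $\vert p\vert \leq q(\vert\zeta^{a/b}\vert + 1)$ whenever $\nu > 0$ and $q$ is not tiny, so the sum of the $b$ monomials in the factorization is dominated by $b\,q^{b-1}(\vert\zeta^{a/b}\vert+1)^{b-1}$). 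Given any prescribed $\eta > 0$, choosing $\nu \geq \eta + b$ and restricting to $q$ large enough to absorb the constant $D(b,\zeta^{a/b})$ produces a coprime pair $(p,q)$ satisfying \eqref{eq:exponent}. Coprimeness of $p^{b}$ and $q^{b}$ follows at once from $\gcd(p,q)=1$.

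For the moreover statement, I would fix $\eta > b$ and invoke Legendre's theorem (Theorem~\ref{prop}) applied to $\zeta^{a}$ with approximation $p^{b}/q^{b}$. Since $\eta > b$, for all sufficiently large $q$ we have $q^{-\eta} < \tfrac{1}{2}(q^{b})^{-1}$, so \eqref{eq:exponent} yields $\vert q^{b}\zeta^{a} - p^{b}\vert < \tfrac{1}{2}(q^{b})^{-1}$. Legendre's theorem then forces $p^{b}/q^{b}$ to coincide with a convergent of $\zeta^{a}$; as $p^{b}/q^{b}$ is already in lowest terms, $q^{b}$ is precisely the denominator of that convergent.

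I do not foresee any essential obstacle: the argument is essentially \eqref{eq:lio} followed by Legendre's theorem. The only point requiring mild care is tracking the constant $D(b,\zeta^{a/b})$ when upgrading from the bound on $\vert q\,\zeta^{a/b} - p\vert$ to that on $\vert q^{b}\zeta^{a} - p^{b}\vert$; this is harmless because $\zeta^{a/b} \in \mathscr{L}$ permits $\nu$ (and thus implicitly $q$) to be chosen arbitrarily large.
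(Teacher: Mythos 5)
Your argument is correct and matches the paper's own proof step for step: both pass from the Liouville approximation of $\zeta^{a/b}$ to the bound on $\vert q^{b}\zeta^{a}-p^{b}\vert$ via the factorization \eqref{eq:lio}, and both obtain the ``moreover'' claim from Legendre's theorem. Your observation that the hypothesis $\zeta\in\mathscr{L}$ plays no role here is also accurate (the paper's proof uses only $\zeta^{a/b}\in\mathscr{L}$), and your version avoids the small slip in the paper's auxiliary condition \eqref{eq:bedi}, where the right-hand side should read $1/(2D(k,\alpha)q^{k})$ rather than $1/(2D(k,\alpha)q)$ to match Legendre's criterion with denominator $q^{k}$ — a harmless issue since $\nu$ is taken arbitrarily large in any case.
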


\begin{proof}
Assume for a real number $\alpha$ and a positive integer $k$ the estimate \eqref{eq:lio2} is satisfied.
This implies \eqref{eq:lio} 
with a constant $D(k,\alpha)$ depending only on $k$ and $\alpha$. Further observe that if we have
\begin{equation} \label{eq:bedi}
q^{-\nu+k-1}<\frac{1}{2D(k,\alpha)q},
\end{equation} 
then Theorem~\ref{prop} and \eqref{eq:lio} imply
for large $q$ that $p^{k}/q^{k}$ is a convergent of $\alpha^{k}$.
Obviously for fixed $k,\alpha$
the estimate \eqref{eq:bedi} is satisfied for any $\nu>k$ and all large $q\geq q_{0}(\nu)$.

Suppose $\zeta$ and $\zeta^{a/b}$ both belong to $\mathscr{L}$ for some suitable $a,b$. 
The above argument with $k=b$, $\alpha=\zeta^{a/b}$ shows that for 
arbitrarily large $\eta$ the estimate \eqref{eq:exponent}
has a solution $(p,q)\in{\mathbb{N}^{2}}$ with $p^{b}/q^{b}$
a convergent of $\zeta^{a}$.  
\end{proof}

In the proof of the more technical case $a>1$ of Theorem~\ref{rati},
we will need the following basic result Lemma~\ref{minkkoro}. 
It can be derived by the combination of Theorem~\ref{prop}
and Proposition~4.6 in~\cite{schl} (or if one prefers directly from Minkowksi's second
lattice point Theorem, see Section~1 in~\cite{sumschm}).

\begin{lemma} \label{minkkoro}
Let $\alpha\in{\mathbb{R}}$.
For any parameter $Q>1$, there cannot be two linearly independent integral
solution pairs $(x,y)$ to the system 
\[
\vert x\vert \leq Q,  \qquad \vert\alpha x-y\vert <\frac{1}{2Q}.
\]
Moreover, if $(x,y)$ is a solution for some $Q$, then $y/x$ must be a convergent of $\alpha$.
\end{lemma}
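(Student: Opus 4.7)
The plan is to prove both assertions through elementary self-contained arguments, essentially reproducing the Minkowski-type ingredient (Proposition~4.6 in~\cite{schl}) rather than invoking it.

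For the uniqueness statement, I would argue by contradiction. Suppose $(x_{1},y_{1})$ and $(x_{2},y_{2})$ are two linearly independent integer solution pairs. The determinant $D:=x_{1}y_{2}-x_{2}y_{1}$ is then a nonzero integer, so $|D|\geq 1$. The key manipulation is to rewrite
\[
D = x_{2}(\alpha x_{1}-y_{1})-x_{1}(\alpha x_{2}-y_{2}),
\]
in which the awkward term $\alpha x_{1}x_{2}$ cancels. Applying the triangle inequality together with the hypotheses $|x_{i}|\leq Q$ and $|\alpha x_{i}-y_{i}|<1/(2Q)$ at once gives $|D|<Q\cdot\frac{1}{2Q}+Q\cdot\frac{1}{2Q}=1$, a contradiction. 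This is the standard ``two independent short vectors'' argument behind Minkowski's second theorem in dimension two.

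For the convergent statement, let $(x,y)$ be a solution for some $Q$. The degenerate case $x=0$ forces $|y|<1/(2Q)<1/2$, hence $y=0$, which can be excluded. Assuming $x\neq 0$, the condition $|x|\leq Q$ yields $1/(2Q)\leq 1/(2|x|)$, so dividing by $|x|$ in the second inequality gives
\[
\left|\alpha-\frac{y}{x}\right|<\frac{1}{2Q|x|}\leq \frac{1}{2x^{2}}.
\]
Legendre's criterion (Theorem~\ref{prop}) then immediately implies that $y/x$, reduced to lowest terms if necessary, equals some convergent of $\alpha$.

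I do not expect any genuine obstacle: the substantive content sits in the determinant trick for linear independence and in the already-quoted Legendre theorem for the convergent property. The only care required is bookkeeping around the trivial pair $(0,0)$ and the inequality $|x|\leq Q$ (rather than $|x|=Q$), which is precisely what allows one to pass from the linear-form bound $1/(2Q)$ to Legendre's Diophantine bound $1/(2x^{2})$.
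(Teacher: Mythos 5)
Your proof is correct, and it is essentially a self-contained version of what the paper merely cites: the paper does not prove Lemma~\ref{minkkoro} at all, but derives it from Theorem~\ref{prop} (Legendre) combined with Proposition~4.6 of~\cite{schl}, or alternatively from Minkowski's second lattice point theorem. Your determinant identity $D=x_{2}(\alpha x_{1}-y_{1})-x_{1}(\alpha x_{2}-y_{2})$ together with $|D|\geq 1$ for linearly independent integer pairs is exactly the elementary two-dimensional argument underlying those references, so nothing is lost and the reader gains a short, transparent proof in place of an external citation; your second part (divide by $|x|$, use $|x|\leq Q$ to get $|\alpha-y/x|<1/(2x^{2})$, then Legendre) is the same route the paper intends. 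The only points worth a sentence of care are the ones you already flag — excluding the trivial pair $(0,0)$ and reducing $y/x$ to lowest terms before invoking Theorem~\ref{prop} — plus the very minor observation that Theorem~\ref{prop} is stated for irrational $\alpha$, whereas the lemma is phrased for all real $\alpha$; in the paper's application $\alpha=\zeta^{a}$ is irrational, and for rational $\alpha$ the convergent claim would need a separate (easy) remark, so this does not affect anything.
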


It will be convenient to apply Dirichlet's~Theorem on primes in arithmetic progressions~\cite{dirichlet}
to shorten the proof of Theorem~\ref{rati}, although more elementary methods would work out as well.
See also Remark~\ref{darnach}. 

\begin{theorem}[Dirichlet]  \label{dps}
Let $A,B$ be coprime positive integers.
Then the arithmetic progression $a_{n}=An+B$ contains infinitely many prime numbers.
\end{theorem}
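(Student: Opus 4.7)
The plan is to give Dirichlet's classical analytic proof via $L$-functions. First, I would introduce the group of characters $\chi\colon (\mathbb{Z}/A\mathbb{Z})^\ast\to\mathbb{C}^\ast$ (there are $\varphi(A)$ of them), extend each $\chi$ to $\mathbb{Z}$ by setting $\chi(n)=0$ when $\gcd(n,A)>1$, and attach to each the series $L(s,\chi)=\sum_{n\geq 1}\chi(n)n^{-s}$. For $\Re(s)>1$, complete multiplicativity of $\chi$ yields the Euler product $L(s,\chi)=\prod_p(1-\chi(p)p^{-s})^{-1}$, and taking logarithms gives
\[
\log L(s,\chi) = \sum_p \frac{\chi(p)}{p^s} + R_\chi(s),
\]
with $R_\chi(s)$ uniformly bounded as $s\to 1^+$. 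The orthogonality relation $\frac{1}{\varphi(A)}\sum_\chi\overline{\chi(B)}\chi(n)=\mathbf{1}_{n\equiv B\,(\mathrm{mod}\,A)}$ for $\gcd(n,A)=1$ then gives
\[
\sum_{p\equiv B\,(\mathrm{mod}\,A)}\frac{1}{p^s}=\frac{1}{\varphi(A)}\sum_\chi\overline{\chi(B)}\log L(s,\chi) + O(1),
\]
so the claim reduces to showing the right-hand side diverges as $s\to 1^+$, since a finite set of primes cannot produce a divergent reciprocal sum.

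Next I would establish three analytic facts. (i) For the principal character $\chi_0$, $L(s,\chi_0)$ differs from the Riemann zeta function by only finitely many Euler factors, hence $\log L(s,\chi_0)\sim\log\frac{1}{s-1}$ as $s\to 1^+$. (ii) For each non-principal $\chi$ the partial sums $\sum_{n\leq N}\chi(n)$ are uniformly bounded (by $\varphi(A)$), so Abel summation extends $L(s,\chi)$ to a holomorphic function on $\Re(s)>0$. (iii) Crucially, $L(1,\chi)\neq 0$ for every non-principal $\chi$. Granting (iii), only the $\chi=\chi_0$ contribution is unbounded, and the right-hand side diverges as required.

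The main obstacle is (iii). For a non-real $\chi$ one pairs it with $\overline\chi$ and studies the product $Z(s):=\prod_\chi L(s,\chi)$; this product expands as a Dirichlet series with non-negative coefficients and dominates a divergent series near $s=1$, so $Z(s)$ cannot be holomorphic at $s=1$. Since only $L(s,\chi_0)$ has a pole there, and zeros of $L(s,\chi)$ for complex $\chi$ would occur in conjugate pairs, this forces $L(1,\chi)\neq 0$ for all complex $\chi$. The genuinely hard case is that of a real non-principal $\chi$, where one must rule out a single possible real zero, typically via positivity of the coefficients of $\zeta(s)L(s,\chi)$ (which count representations by the principal form of the associated quadratic field) or, alternatively, via the class number formula. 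This last step is the heart of the proof and is where the real conceptual effort lies.
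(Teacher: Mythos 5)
The paper does not prove this statement at all: Theorem~\ref{dps} is quoted as a classical result, with a citation to Dirichlet's 1837 memoir, and is then used as a black box in the proof of Theorem~\ref{rati} (to choose partial quotients $r_{g+1}$ making $t_{g+1}=r_{g+1}t_{g}+t_{g-1}$ prime, using that $t_{g},t_{g-1}$ are coprime). So there is no ``paper proof'' to match; what you have written is the standard analytic proof via Dirichlet characters and $L$-functions, and as an outline it is the correct classical route: Euler product and logarithm for $\Re(s)>1$, orthogonality of characters to isolate $p\equiv B \pmod{A}$, the pole of $L(s,\chi_{0})$ at $s=1$, holomorphy of $L(s,\chi)$ on $\Re(s)>0$ for non-principal $\chi$ via bounded partial sums and Abel summation, and the reduction to $L(1,\chi)\neq 0$.

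Two caveats on completeness. First, your argument for complex $\chi$ is stated a bit loosely: the clean version is that $\log Z(s)=\sum_{\chi}\log L(s,\chi)$ has non-negative coefficients, so $Z(s)\geq 1$ for real $s>1$; if $L(1,\chi)=0$ for a non-real $\chi$, then $L(1,\overline{\chi})=0$ as well, and two zeros against the single simple pole of $L(s,\chi_{0})$ would force $Z(s)\to 0$ as $s\to 1^{+}$, a contradiction. As written, ``$Z(s)$ cannot be holomorphic at $s=1$'' is not quite the right dichotomy, since a single cancelling zero would leave $Z$ holomorphic and non-vanishing statements must be extracted from positivity (or from Landau's theorem applied to $Z$, which in fact handles real and complex characters simultaneously via divergence at $s=1/\varphi(A)$). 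Second, the real non-principal case is only named, not carried out; you correctly identify it as the heart of the matter, but the proposal as it stands is a roadmap rather than a proof of that step. For the purposes of this paper, where the theorem is invoked as a standard reference, this level of detail is of course more than the authors supply.
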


Now we are ready for the proof of Theorem~\ref{rati}.

\subsection{Proof of Theorem~\ref{rati}}

As in Theorem~\ref{easycheesy} we may assume $a>0, b>0$.
If $a/b$ is an integer and $\zeta\in{\mathscr{L}}$, 
then $f_{a,b}(\zeta)\in \mathscr{L}$ by Theorem~\ref{maillet}.
Thus it suffices to construct $\zeta\in{\mathscr{L}}$ with $f_{a,b}(\zeta)\notin{\mathscr{L}}$ 
simultaneously for all coprime pairs $a,b$ with
$b\geq 2$. At first we drop the restriction $\zeta\in{I}$.
Due to Lemma~\ref{inf}, it suffices to find $\zeta\in{\mathscr{L}}$ 
such that for each pair $a,b$ we can find $\eta=\eta(a,b)>b$
such that $\zeta^{a}$ has no convergent of the form $p^{b}/q^{b}$ 
for which \eqref{eq:exponent} has a solution for $\eta=\eta(a,b)$,
to infer $\zeta^{a/b}\notin{\mathscr{L}}$. 

We construct such $\zeta$. We want that the partial quotients of $\zeta$ are
rapidly increasing and all denominators of convergents of $\zeta$ are prime numbers. 
With the notation as above, 
suppose the partial denominators $r_{0},r_{1},\ldots,r_{g}$ are constructed with the property 
that the denominators of all convergents $s_{1}/t_{1},\ldots, s_{g}/t_{g}$ are 
primes. Subsequent to \eqref{eq:contf} we remarked that $t_{g-1},t_{g}$ are coprime. 
By Theorem~\ref{dps} and \eqref{eq:contf},
we may choose arbitrarily large $r_{g+1}$ such that $t_{g+1}$ is prime.
We require $r_{g+1}\geq t_{g}^{g}$, and for technical reasons 
the sequence $r_{n}$ should moreover grow fast enough that if $\nu_{n}$ is
defined by by $\vert\zeta t_{n}- s_{n}\vert= t_{n}^{-\nu_{n}}$, then $\nu_{n+1}>\nu_{n}$ in any step.
By Theorem~\ref{cfe} obviously $\lim_{n\to\infty} \nu_{n}=\infty$, such that
this procedure indeed leads to $\zeta\in{\mathscr{L}}$. 
We have to show that $\zeta$ has the requested property. 
Throughout the remainder of the proof let $\delta>0$ be
arbitrarily small but fixed.

First let $a=1$. In this case it suffices to put $\eta(1,b)=b+\delta$ and
observe that by construction all convergents of $\zeta^{a}=\zeta$
have prime denominators and hence no convergent is of the 
form $p^{b}/q^{b}$ for $b\geq 2$.

Now let $a\geq 2$. We show that the inequality
\begin{equation} \label{eq:exp2}
\vert x\zeta^{a}-y\vert \leq x^{-a-\delta}
\end{equation}
can hold for $(x,y)\in{\mathbb{N}^{2}}$ with large $x$ only in case of $(x,y)$
an integral multiple of some $(q^{\prime a},p^{\prime a})$, where $p^{\prime a}/q^{\prime a}$ is a convergent 
of $\zeta^{a}$ in lowest terms. More precisely, $(p^{\prime},q^{\prime})=(s_{n},t_{n})$ for 
some $n$, with $s_{n},t_{n}$ as above. Assume this is true.
Let $\eta=\eta(a,b)=\max\{a+\delta,b+\delta\}$. Assume for this choice of $\eta$ there exist
solutions of \eqref{eq:exponent}, that must be
convergents of $\zeta^{a}$ of the form $p^{b}/q^{b}$ by Lemma~\ref{inf}. On the other hand, by the above
observation and the choice of $\eta$, these solutions must at the same time have a representation
as a quotient of $a$-th powers of integers $p^{\prime a}/q^{\prime a}$.
Since $a,b$ are coprime and $q^{\prime}=t_{n}$ is a prime number, this 
is clearly impossible, contradiction.
This yields again an indirect proof of $\zeta^{a/b}\notin{\mathscr{L}}$. 

It remains to check the assertion above.
We have to check that for $(x,y)\in{\mathbb{N}^{2}}$ with large $x$ and 
linearly independent to any $(s_{n}^{a},t_{n}^{a})$, we cannot have \eqref{eq:exp2}.
Consider large $x$ fixed and let $N$ be the index such that $t_{N}\leq x<t_{N+1}$. 
Recall all $s_{n}/t_{n}$ are very good approximations to $\zeta$.
By construction of $\zeta$ and definition of $\nu_{n}$,
in particular we have $\vert \zeta t_{n+1}-s_{n+1}\vert< t_{n+1}^{-\nu_{n}}$.
Then similar to \eqref{eq:lio} we can write
\begin{align} 
\vert t_{N}^{a}\zeta^{a}-s_{N}^{a}\vert=
\vert t_{N}\zeta-s_{N}\vert\cdot\vert t_{N}^{a-1}\zeta^{a-1}+\cdots+s_{N}^{a-1}\vert
&\leq D(a,\zeta)t_{N}^{-\nu_{N}+a-1}   \label{eq:liovi1}  \\
\vert t_{N+1}^{a}\zeta^{a}-s_{N+1}^{a}\vert=
\vert t_{N+1}\zeta-s_{N+1}\vert\cdot\vert t_{N+1}^{a-1}\zeta^{a-1}+\cdots+s_{N+1}^{a-1}\vert
&\leq D(a,\zeta)t_{N+1}^{-\nu_{N}+a-1}.     \label{eq:liovi2}
\end{align}
Moreover $t_{N+1}\asymp t_{N}^{\nu_{N}}$ in view of \eqref{eq:contf} and Theorem~\ref{cfe}. 
We distinguish two cases.

Case 1: $t_{N}\leq x< t_{N}^{a}$. We apply Lemma~\ref{minkkoro}, 
with $Q:=t_{N}^{a}$. Since $(s_{N}^{a},t_{N}^{a})$
leads to a good approximation for $\zeta^{a}$ by \eqref{eq:liovi1}, 
there cannot be another vector $(u,v)\in{\mathbb{N}^{2}}$
linearly independent to $(s_{N}^{a},t_{N}^{a})$ with $u<t_{N}^{a}$ that leads to a good approximation.
As the condition $x<t_{N}^{a}$ is satisfied by assumption, 
Lemma~\ref{minkkoro} more precisely yields that $\vert \zeta^{a} x-y\vert>(1/2)t_{N}^{-a}$.
Since $t_{N}\leq x$, for large $x$ (or $N$) we conclude 
\[
\vert \zeta^{a} x-y\vert>(1/2)t_{N}^{-a}\geq (1/2)x^{-a}>x^{-a-\delta},
\]
indeed a contradiction to \eqref{eq:exp2}.

Case 2: $t_{N}^{a}\leq x<t_{N+1}$. First assume $x$ is close to $t_{N+1}$, more precisely
$t_{N+1}^{1-\epsilon}\leq x<t_{N+1}$ for $\epsilon\in{(0,\delta/(a+\delta))}$. Then 
we may use the same argument as in case 1 with $Q=t_{N+1}^{a}$ instead of $Q=t_{N}^{a}$,
since $\vert \zeta^{a} x-y\vert>(1/2)t_{N+1}^{-a}>x^{-a-\delta}$ is still valid. 
So we may assume $t_{N}^{a}\leq x<t_{N+1}^{1-\epsilon}$.
In this case we apply Lemma~\ref{minkkoro} with $Q:=x$. Assume \eqref{eq:exp2}
holds. Then $(x,y)$ is a pair with $\vert \zeta^{a} x-y\vert<(1/2)Q^{-1}$, so by
Lemma~\ref{minkkoro} there cannot be another such pair linearly independent to $(x,y)$.
However, we show $(s_{N}^{a},t_{N}^{a})$ satisfies the inequality as well.
Recall $t_{N+1}\asymp t_{N}^{\nu_{N}}$ such that $Q=x<t_{N+1}^{1-\epsilon}$ yields
$Q^{1/[(1-\epsilon)\nu_{N}]}\ll t_{N}$. By \eqref{eq:liovi1}  
we infer
\[
\vert t_{N}^{a}\zeta^{a}-s_{N}^{a}\vert \ll t_{N}^{-\nu_{N}+a-1}
\ll Q^{\frac{-\nu_{N}+a-1}{(1-\epsilon)\nu_{N}}}\ll Q^{-\frac{1}{1-\epsilon}}
\]
for large $N$ as $\nu_{N}$ is then large too.
Since $1/(1-\epsilon)>1$ the right hand side is indeed smaller than $(1/2)Q^{-1}$
for large $x=Q$ and the contradiction again shows \eqref{eq:exp2} is false. 

Finally, we may allow the continued fraction expansion of $\zeta$ to start
with arbitrary $[r_{0};r_{1},r_{2},\ldots,r_{l}]$ and then start the above procedure.
Hence the method is flexible enough to guarantee uncountably many 
suitable $\zeta$ in any subinterval of $(0,\infty)$. This completes the proof.

\begin{remark} \label{darnach}
The constructed $\zeta\in{\mathscr{L}}$ in the proof of Theorem~\ref{rati}
are strong Liouville numbers, see Definition~\ref{defilio}.
Indeed, the method of the proof for $a\geq 2$ with Lemma~\ref{minkkoro} requires that there are no large gaps
between denominators of convergents with very good approximation to $\zeta$. 
Conversely, the proof basically does work for any semi-strong Liouville number
for which no convergent is of the form $p^{b}/q^{b}$ for some $b\geq 2$.     
Recall that for $a=1$, it was already known by Maillet that a sufficient condition is
that no convergent $p/q$ of $\zeta$ 
is of the form $p^{b}/q^{b}$ for $b\geq 2$, which is rather easy to construct. 
\end{remark}

The proof of Theorem~\ref{rati} provides explicit upper bounds for the irrationality 
exponent of $\zeta^{a/b}$ for the involved $\zeta\in{\mathscr{L}}$.

\begin{corollary} \label{dadkorollar}
Let $f_{a,b}(z)$ as in Theorem~~{\upshape\ref{easycheesy}} and
$\zeta\in{\mathscr{L}}$ be constructed as in the proof of Theorem~{\upshape\ref{rati}}. Then
$f_{a,b}(\zeta)\in{\mathscr{L}}$ for $a/b$ an integer but
$\mu(f_{a,b}(\zeta))\leq \max\{\vert a\vert,\vert b\vert\}+\vert b\vert$
simultaneously for all $a,b$ for which $a/b$ is not an integer.
\end{corollary}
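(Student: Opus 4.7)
The first assertion is immediate from Theorem~\ref{maillet}, since $a/b \in \mathbb{Z}$ makes $f_{a,b}$ a polynomial with rational coefficients. For the bound on $\mu(\zeta^{a/b})$ when $a/b \notin \mathbb{Z}$, the plan is to sharpen the proof of Theorem~\ref{rati} by tracking the exponent quantitatively. I may assume $a,b$ are positive and coprime with $b \geq 2$, since $\mu$ is invariant under reciprocation and the statement depends only on the reduced representative of $a/b$.

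Assume for contradiction that $\mu(\zeta^{a/b}) > M$ for some $M > \max\{a,b\} + b$. Then there exist infinitely many coprime pairs $(p,q)$ with $|q\zeta^{a/b} - p| \leq q^{-M+1}$. Applying the elementary factorization used to pass from \eqref{eq:lio2} to \eqref{eq:lio}, with $k = b$ and $\alpha = \zeta^{a/b}$, one obtains
\[
|q^b \zeta^a - p^b| \leq D\, q^{-M+b},
\]
where $D = D(b, \zeta^{a/b})$. Fixing $\delta > 0$ with $\eta := M - b - \delta > \max\{a,b\}$, the constant $D$ is absorbed for large $q$, leaving infinitely many $q$ for which $|q^b \zeta^a - p^b| \leq q^{-\eta}$.

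Now I invoke the two quantitative ingredients already deployed in the proof of Theorem~\ref{rati}. Since $\eta > b$, Lemma~\ref{inf} (equivalently, Theorem~\ref{prop} applied to $\zeta^a$) forces $p^b/q^b$, which is in lowest terms thanks to $\gcd(p,q)=1$, to coincide with a convergent of $\zeta^a$. When $a = 1$ this already yields a contradiction, since every convergent denominator of $\zeta$ is prime by construction, but $q^b$ cannot be prime for $b \geq 2$ and $q \geq 2$. When $a \geq 2$, the condition $\eta > a$ lets me invoke the auxiliary claim from the proof of Theorem~\ref{rati} (built on Lemma~\ref{minkkoro}) to pin this convergent down to the form $s_n^a/t_n^a$; then $q^b = t_n^a$ with $t_n$ prime and $\gcd(a,b)=1$, $b \geq 2$, forces $b \mid a$, contradicting $a/b \notin \mathbb{Z}$.

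The main obstacle is nothing more than bookkeeping: confirming that the exponent loss of exactly $b$ in raising the approximation to the $b$-th power, together with the two strict thresholds $\eta > a$ and $\eta > b$ imposed respectively by the Minkowski step and by Legendre, compose precisely into the clean bound $\max\{a,b\} + b$. Since both thresholds are strict and $\delta$ may be chosen arbitrarily small, the small loss is absorbed in the supremum defining $\mu$, yielding $\mu(\zeta^{a/b}) \leq \max\{|a|,|b|\} + |b|$.
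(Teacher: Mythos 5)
Your proposal is correct and takes essentially the same route as the paper: the paper likewise reduces to positive coprime $a,b$ via $\mu(\alpha^{-1})=\mu(\alpha)$, converts a hypothetical approximation of $\zeta^{a/b}$ with exponent beyond $\max\{a,b\}+b$ into a solution of \eqref{eq:exponent} with $\eta>\max\{a,b\}$ through the passage from \eqref{eq:lio2} to \eqref{eq:lio} (losing $b-1$ in the linear form, plus $1$ for the transition between linear forms and fractions), and then contradicts the non-existence of such solutions for $\eta=\max\{a+\delta,b+\delta\}$ established in the proof of Theorem~\ref{rati}. Your explicit re-running of that endgame (Legendre plus prime convergent denominators for $a=1$, the Lemma~\ref{minkkoro}-based claim for $a\geq 2$) is precisely the part of Theorem~\ref{rati}'s proof that the paper cites rather than repeats, so the two arguments coincide.
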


\begin{proof}
If $a/b$ is an integer then the assertion follows from Theorem~\ref{maillet} 
as already observed in Theorem~\ref{rati}.
Thus, and since $\mu(\alpha^{-1})=\mu(\alpha)$, we can restrict to $a>0, b>0$ and $a/b$ not an integer.
Let $a=1$. Indeed, the fact that \eqref{eq:exponent} has no (large) solution for 
$\eta=b+\delta$, implies that \eqref{eq:lio2} has no (large) 
solution for $\nu=(b+\delta)+(b-1)=2b-1+\delta$. With $\delta \to 0$ 
and adding $1$ taking into account the transition from linear forms to fractions, we obtain the bound.
The same argument can be applied for $a\geq 2$ with $\eta(a,b)=\max\{a+\delta,b+\delta\}$.
\end{proof}


\section*{Acknowledgements}
The first author is supported by the grant CNPq and FAP-DF and the second author 
is supported by the Austrian Science Fund FWF grant P24828.

\end{document}